\documentclass[A4,12pt]{article}
\usepackage{amssymb}
\usepackage{amsmath}
\usepackage{amsthm}
\usepackage[nooneline]{subfigure}
\usepackage{graphicx}
\usepackage{varwidth}
\usepackage{float}
\usepackage{fullpage} 
\usepackage{color}

\newtheorem{thm}{Theorem}[section]
\newtheorem{prop}[thm]{Proposition}
\newtheorem{lemma}[thm]{Lemma}

\newtheorem{rem}[thm]{Remark}
\newtheorem{defi}[thm]{Definition}

\theoremstyle{definition}
\newtheorem{example}{Example}

\providecommand{\argmin}{\operatorname{arg\,min}\enskip}
\newcommand\M{\mathcal{M}}
\newcommand{\R}{\mathbb{R}}

\newcommand{\Rext}{\mathbb{R} \cup \{ \infty \} }
\newcommand{\rslv}[1]{\left(I + #1\right)^{-1}}
\newcommand{\norm}[1]{\Vert #1 \Vert}

\providecommand{\iprod}[2]{\langle#1,#2\rangle}


\newcommand\figDama
{
  \begin{figure}[t!]
    \begin{center}
      \setlength{\tabcolsep}{.7mm}
      \begin{tabular}{ccc}
        \includegraphics[width=0.32\textwidth]{}&
        \includegraphics[width=0.32\textwidth]{}&
        \includegraphics[width=0.32\textwidth]{}\\

        Input Image. & $u^0=0.$ & $u^0=f.$
      \end{tabular}
    \end{center}
    \vspace{-3mm}
    \caption{
      We compute a piecewise smooth approximation of the input image by minimizing the functional \eqref{eq:MSDenoising}. The parameters were $\alpha = 10$, $\lambda = 0.1$ and $\varepsilon_0 = 0.5$. Unlike the approach \cite{MS-tr} we use completely constant step sizes $\tau$ and $\sigma$. We see that the end result is dependent on the initialization. Using a smooth initialization $u^0=0$ yields an overall smoother result than setting $u^0=f$.  
    }
    \label{fig:dama}
  \end{figure}
}

\newcommand\figTruncQuad
{
  \begin{figure}[t!]
    \begin{center}
      \setlength{\tabcolsep}{.7mm}
      \begin{tabular}{c}
        \includegraphics[width=0.8\textwidth]{}
      \end{tabular}
    \end{center}
    \vspace{-3mm}
    \caption{
      We show the truncated quadratic regularizer \eqref{eq:truncQuadSmooth} along with the smoothed $\omega$-semiconvex variant given in \cite{fornasier}. Parameters are $\alpha = 10$, $\lambda=0.1$ and $\varepsilon_0 = 0.5$.
    }
    \label{fig:truncQuad}
  \end{figure}
}

\newcommand\figCracktip
{
  \begin{figure}[t!]
    \begin{center}
      \setlength{\tabcolsep}{.7mm}
      \begin{tabular}{ccccc}
        \includegraphics[width=0.19\textwidth]{}&
        \includegraphics[width=0.19\textwidth]{}&
        \includegraphics[width=0.19\textwidth]{}&
        \includegraphics[width=0.19\textwidth]{}&
        \includegraphics[width=0.19\textwidth]{init_zero.png}\\

        Global Minimum & Initialization & Initialization & Initialization & Initialization \\\\

        \includegraphics[width=0.19\textwidth]{}&
        \includegraphics[width=0.19\textwidth]{}&
        \includegraphics[width=0.19\textwidth]{}&
        \includegraphics[width=0.19\textwidth]{}&
        \includegraphics[width=0.19\textwidth]{}\\

        Relaxation \cite{Pock-et-al-iccv09} & $\sigma_0=2$, $\gamma=\frac{3}{40}$ & $\sigma_0=2$, $\gamma=\frac{3}{40}$ & $\sigma_0=2$, $\gamma=\frac{3}{40}$ & $\sigma_0=1$, $\gamma=\frac{1}{8}$
      \end{tabular}
    \end{center}
    \vspace{-3mm}
    \caption{
      In this figure we show the Mumford-Shah inpainting results obtained with the nonconvex PDHG algorithm. The first row shows the analytical solution to the cracktip problem along with the different initializations. The second row shows the results obtained by the nonconvex primal-dual algorithm next to the convex relaxation \cite{Pock-et-al-iccv09}. Note that the results obtained by the nonconvex primal-dual algorithm heavily depend on the initialization and chosen step size scheme.
    }
    \label{fig:cracktip}
  \end{figure}
}

\newcommand\figDithering
{
  \begin{figure}[t!]
    \begin{center}
      \setlength{\tabcolsep}{.7mm}
      \begin{tabular}{cccc}
        \includegraphics[width=0.24\textwidth]{}&
        \includegraphics[width=0.24\textwidth]{}&
        \includegraphics[width=0.24\textwidth]{}&
        \includegraphics[width=0.24\textwidth]{}\\
        Input Image $u^0$. & Result $\widehat u$ on \eqref{eq:dithering}. & Binarized $\widehat u$. & $k * \widehat u$. 
      \end{tabular}
    \end{center}
    \vspace{-3mm}
    \caption{Variational Image Dithering. \textbf{From left to right:} We show the input image along with the result of minimizing the nonconvex cost functional \eqref{eq:dithering} using the primal-dual algorithm and a thresholded version of the minimizer. Notice that the result is almost binary. On the right we show the minimizer convolved with the gaussian kernel $k$.}
    \label{fig:dithering}
  \end{figure}
}

\newcommand\figConvergences
{
  \begin{figure}[h!]
    \begin{center}
      \setlength{\tabcolsep}{.7mm}
      \begin{tabular}{cc}
        \includegraphics[width=0.48\textwidth]{}&
        \includegraphics[width=0.48\textwidth]{}\\

        \includegraphics[width=0.48\textwidth]{}&
        \includegraphics[width=0.48\textwidth]{}\\

        \includegraphics[width=0.48\textwidth]{}&
        \includegraphics[width=0.48\textwidth]{}\\
      \end{tabular}
    \end{center}
    \vspace{-3mm}

    \caption{
      We experimentally show that the distance between the iterates $u^n$ and $q^n$ approaches zero. Using Proposition~\ref{lem:distances} we can indeed verify that the algorithm converges to a critical point of the according energy functional in that specific example. This suggests the conjecture that the algorithm also converges energies which are overall $\omega$-semiconvex.
    }

    \label{fig:Convergences}
  \end{figure}
}

\newcommand\figSubdifferential
{
\begin{figure}[t!]
  \begin{center}
    \subfigure[The subdifferential of a function which is differentiable in the classical sense, consists of only one element which is the derivative of the function.]{\includegraphics[width=0.31\textwidth, natwidth=560,natheight=420]{}} \hspace{0.25cm}
    \subfigure[{For convex functions, definition \ref{def:subdiff} coincides with the usual definition of the subdifferential of convex functions. For the absolute value function illustrated above, we have $\partial |0| = [-1,1]$}]{\includegraphics[width=0.31\textwidth, natwidth=560,natheight=420]{}} \hspace{0.25cm}
    \subfigure[{Illustration of the subgradients of a nonconvex function. In our example, $ \partial E(0) = (-\infty, 1] $ due to the lower semicontinuoity at the discontinuity.}]{\includegraphics[width=0.31\textwidth, natwidth=560,natheight=420]{}}
    \caption{Examples of subgradients of one dimensional lower semicontinuous functions. The subgradients are illustrated as tangent lines where the slope is an element of the subdifferential.}
    \label{fig:1dSubgrads}
  \end{center}
\end{figure}
}

\newcommand\figBoundedness
{
\begin{figure}[t!]
  \begin{center}
    \includegraphics[width=0.6\textwidth]{}
    \caption{Illustration of the conditions of Proposition \ref{prop:boundedness} in a simple 1D case. We have a nonconvex part (blue) and a convex part (red) whose sum results in a total energy (green) which is nonconvex. In this case it is important that at some point $a$ the worst case slope of the nonconvex function (attained here in the linear parts) is dominated by the convex function. We can see that this is a sufficient condition for the total energy being coercive. }
    \label{fig:boundednessAssumption}
  \end{center}
\end{figure}
}

\newcommand\figDenoiseAndSharpen
{
\begin{figure}[H]
  \begin{center}
    \subfigure[{TV flow 0 iterations}]{\includegraphics[width=0.32\textwidth, natwidth=560,natheight=420]{}} 
    \subfigure[{TV flow 1 iteration}]{\includegraphics[width=0.32\textwidth, natwidth=560,natheight=420]{}} 
        \subfigure[{TV flow 2 iterations}]{\includegraphics[width=0.32\textwidth, natwidth=560,natheight=420]{}} \\   
        \subfigure[{Enhanced TV flow 0 iterations}]{\includegraphics[width=0.32\textwidth, natwidth=560,natheight=420]{peppers_}}
            \subfigure[{Enhanced TV flow 1 iteration}]{\includegraphics[width=0.32\textwidth, natwidth=560,natheight=420]{}}
    \subfigure[{Enhanced TV flow 2 iterations}]{\includegraphics[width=0.32\textwidth, natwidth=560,natheight=420]{}} \\
        \subfigure[{TV flow 3 iterations}]{\includegraphics[width=0.32\textwidth, natwidth=560,natheight=420]{}} 
    \subfigure[{TV flow 4 iterations}]{\includegraphics[width=0.32\textwidth, natwidth=560,natheight=420]{}} 
        \subfigure[{TV flow 5 iterations}]{\includegraphics[width=0.32\textwidth, natwidth=560,natheight=420]{}} \\   
        \subfigure[{Enhanced TV flow 3 iterations}]{\includegraphics[width=0.32\textwidth, natwidth=560,natheight=420]{}}
            \subfigure[{Enhanced TV flow 4 iterations}]{\includegraphics[width=0.32\textwidth, natwidth=560,natheight=420]{}}
    \subfigure[{Enhanced TV flow 5 iterations}]{\includegraphics[width=0.32\textwidth, natwidth=560,natheight=420]{}}    
    \caption{Example of incorporating a nonconvex enhancement term into the TV-flow.} 
    
    \label{fig:denoiseAndSharpen}
  \end{center}
\end{figure}
}

\newcommand\figShapeOfN
{
\begin{figure}[H]
  \begin{center}
  {\includegraphics[width=0.45\textwidth, natwidth=560,natheight=420]{}}
  \caption{One dimensional example for the function $N(Au)$ discussed above. $N$ is nonconvex, nonsmooth, but $\omega$-semiconvex.}
    \label{fig:n_of_u_example}
  \end{center}
\end{figure}
}

\newcommand\figIllucorrectionOne
{
\begin{figure}[b!]
    \subfigure[{Original noisy image.}]{\includegraphics[width=0.32\textwidth]{noisy.png}}
    \subfigure[{TV denoised image.}]{\includegraphics[width=0.32\textwidth, natwidth=560,natheight=420]{}}
    \subfigure[Image after TV denoising along with encouraging intensity values to be either 0.3 or 0.9.]{\includegraphics[width=0.32\textwidth, natwidth=560,natheight=420]{}} \\
    \subfigure[Sorted intensity distribution of the pixels of the original image.]{\includegraphics[width=0.32\textwidth, natwidth=560,natheight=420]{}} 
    \subfigure[Sorted intensity distribution of the pixels of the TV denoised image.]{\includegraphics[width=0.32\textwidth, natwidth=560,natheight=420]{}} 
    \subfigure[Sorted intensity distribution of the pixels of the image reconstructed with the nonconvex term.]{\includegraphics[width=0.32\textwidth, natwidth=560,natheight=420]{}} 
    \caption{Example of incorporating a nonconvex tonemapping term into the ROF model.}
    \label{fig:denoiseAndIllucorrect}
\end{figure}
}

\newcommand\figIllucorrectionTwo
{
\begin{figure}[t!]
  \begin{center}
    \subfigure[{Noisy image with small range of values.}]{\includegraphics[width=0.32\textwidth]{}}
    \subfigure[{TV denoised image.}]{\includegraphics[width=0.32\textwidth, natwidth=560,natheight=420]{}}
    \subfigure[Image after TV denoising along with encouraging intensity values to be either 0 or 1.]{\includegraphics[width=0.32\textwidth, natwidth=560,natheight=420]{}} 
    \caption{Example of incorporating a nonconvex tonemapping term into the ROF model.}
    \label{fig:denoiseAndIllucorrect2}
  \end{center}
\end{figure}
}

\title{The Primal-Dual Hybrid Gradient Method for Semiconvex Splittings}

\author{Thomas M\"ollenhoff \thanks{Technical University Munich, Boltzmannstrasse 3,
85748 Garching, Germany. This work was supported by the ERC Starting Grant 'Convex Vision'.} \and Evgeny Strekalovskiy \footnotemark[1] \and Michael Moeller \footnotemark[1] \and Daniel Cremers \footnotemark[1]}
\begin{document}
\maketitle

\begin{abstract}
  This paper deals with the analysis of 
 a recent reformulation of the primal-dual hybrid gradient method
  \cite{ZhuChan08, Pock-et-al-iccv09, Esser-Zhang-Chan-10, Chambolle-Pock-jmiv11}, which allows to apply it to nonconvex
  regularizers as first proposed for truncated quadratic penalization
  in \cite{MS-tr}. 
Particularly, it investigates
  variational problems for which the energy to be minimized can be
  written as $G(u) + F(Ku)$, where $G$ is convex, $F$ semiconvex, and $K$ is a linear operator. 
  We study the method and prove convergence in the case
  where the nonconvexity of $F$ is compensated by the strong convexity
  of the $G$. The convergence proof yields an interesting requirement
  for the choice of algorithm parameters, which we show to not only be
  sufficient, but necessary. Additionally, we show boundedness of the
  iterates under much weaker conditions. Finally, we demonstrate
  effectiveness and convergence of the algorithm beyond the
  theoretical guarantees in several numerical experiments.
\end{abstract}

\section{Introduction}
One of the most successful and popular approaches to ill-posed inverse
problems particularly those arising in imaging and computer vision are
variational methods. Typically, one designs an energy functional $E: X
\rightarrow \Rext$, which maps an image $u$ from a Banach space $X$ to
a number on the extended real value line, such that a low energy
corresponds to an image with desired properties. The solution is then
determined by finding the argument that minimizes the corresponding
energy functional. In this paper we are concerned with energies that
can be written as
$$E(u) = G(u) + F(Ku).$$ 
In many cases $G(u)$ can be interpreted as a data fidelity term
enforcing some kind of closeness to the measurements and $F(Ku)$
serves as a regularization term which incorporates prior information
about $Ku$, where $K$ denotes a linear operator.

One property that many functionals appearing in practice have in
common is that both $G$ and $F$ often have easy-to-evaluate proximity
operators, i.e. the minimization problem
$$ \rslv{\tau \partial G}(v)=\underset{u}\argmin G(u) + \frac{1}{2 \tau}\|u-v\|_2^2 $$ 
can be solved efficiently to a high precision or even has a closed
form solution. This observation has lead to many very efficient
optimization methods, such as the primal-dual hybrid gradient method
(PDHG), the alternating minimization algorithm (AMA) or equivalently
proximal forward backward splitting (PFBS) or the alternating
directions method of multipliers (ADMM).
We refer to \cite{Esser-Zhang-Chan-10} for a detailed overview over these different splitting methods and their connections. 

Although the majority of the proposed variational approaches in image
processing and computer vision have focused on the use of convex
functionals, there are many interesting nonconvex functionals such as
the Mumford-Shah functional \cite{Mumford-Shah-89} or related
piecewise smooth approximations \cite{Blake-Zisserman-87,
Morel-Solimini-95}, nonconvex regularizers such as TV$^q$ approaches
motivated by natural image statistics (cf.~\cite{Huang-Mumford-99,
HyperLaplacian}), nonconvex constraints such as orthogonality
constraints or sperical constraints (cf.~\cite{Wen-Yin-13} and the
references therein), or application/model dependent nonconvex
functionals for which the prior knowledge cannot be appropriately
represented in a convex framework
(e.g. \cite{Esser-Zhang-14}). Interestingly, the proximity operators
to the nonconvex functions can often still be evaluated efficiently,
such that splitting approaches seem to be a very effective choice even
in the nonconvex case.

This paper investigates the use of a particular splitting based
optimization technique for certain nonconvex $F$ with easy-to-evaluate
proximity operators. As we will see, the proposed algorithm coincides
with the PDHG method in the convex case, but is reformulated such that
it can be applied in a straightforward fashion even to energies with
nonconvex regularization functionals. This reformulation was initially
proposed in \cite{MS-tr}.

The rest of this paper is organized as follows. First, we will summarize and discuss the literature on nonconvex optimization and point out some difficulties of existing methods. 
Particularly, we discuss why many methods are rather costly in
comparison to direct splitting approaches, such as our proposed
iterative PDHG scheme. In Section~\ref{sec:theory} we will analyze the
behavior of the algorithm and prove that it converges to the minimum
in case where the nonconvexity of $F$ is compensated by the strong
convexity of $G$. Additionally, we provide an example showing that the
parameter choices which are sufficient to guarantee the convergence
are also necessary. Furthermore, we will follow the analysis done for
the same algorithm in the case of a nonconvex $G$ but convex $F$ in
\cite{Esser-Zhang-14}, and show that the iterates remain bounded under
rather mild conditions. We can conclude the convergence to a critical
point if additionally the difference between two sucessive iterates
tends to zero. In our numerical experiments in
Section~\ref{sec:experiments} we will first discuss some cases in
which our full convergence proof holds and illustrate why this class
of problems can be efficiently tackled with splitting
approaches. Next, we provide numerical examples showing that
convergence along subsequences to critical points is achieved for a
much wider class of problems and often goes beyond the theoretical
guarantees. Finally, we draw conclusions and point out future
directions of research in Section~\ref{sec:conclusions}.

\subsection{Related Work}
For the rest of this paper, let $X$ and $Y$ be finite dimensional Hilbert spaces endowed with an
inner product $\iprod{\cdot}{\cdot}$ and induced norm $\norm{\cdot}$. In this paper we are concerned with the following class of
problems.Let $K
: X \rightarrow Y$ be a linear operator and consider the optimization problem
\begin{equation}
  \underset{u} \min \enskip G(u) + F(K u),
  \tag{P}
  \label{eq:optiProb}
\end{equation}
where $G : X \rightarrow \Rext$ is proper, convex and lower semicontinuous, $F : Y \rightarrow \Rext$ proper, lower semicontinuous and possibly nonconvex. For now we will not specify further properties of $F$, but in the absence of convexity one typically needs some kind of smoothness assumption or, as in our case, $\omega$-semiconvexity as defined in Section~\ref{sec:theory}. We will also work with the equivalent problem
\begin{equation}
  \underset{u,g} \min \enskip G(u) + F(g) \quad \text{subject to} \quad g = Ku.
  \tag{PC}
  \label{eq:optiProbConstrained}
\end{equation}
We will also assume that the above minimization problems are well defined in the sense that minimizers exist.
 
Generally, problems of the form \eqref{eq:optiProb} or even more general nonconvex minimization problems have been studied in several cases and there exist a number of optimization techniques, which provably converge to critical points. In the following we will briefly summarize these approaches:
\begin{itemize}

\item The simplest and probably oldest approach is to use gradient
  descent in case where the entire energy is smooth. If $E$ is coercive,
  and one uses gradient descent with appropriate step sizes, the
  sequence is bounded and every accumulation point is critical
  (cf.~\cite{Bertsekas-2003}). Gradient descent methods are, however,
  often inefficient. Additionally, they require the whole energy to be
  smooth and thus cannot deal with hard constraints.

\item In the case where the nonconvex part is smooth and the convex
  part is nondifferentiable, one can apply forward-backward splitting
  techniques. We refer to \cite{attouch13} and the references therein
  for convergence results for descent methods. To give an example,
  recently, Ochs et al. \cite{iPiano} proposed a forward-backward splitting approach
  called \textit{iPiano} which additionally uses a heavy-ball type of
  approach and provably converges to critical points under mild
  conditions. The basic idea of forward-backward
  splitting approaches is to construct a sequence of minimizers
  $$ u^{k+1} = \rslv{\tau_k \partial G}\left( u^k - \tau_k K^T \nabla F \left( Ku^k \right) \right).$$ 
  While this approach works well and is applicable to many situations it
  has the drawback that $F$ has to be differentiable. Thus any type of
  hard constraint has to be incorporated into $G$, which can make the above
  minimization problem hard to evaluate if no efficient closed form
  projection exists.


\item For nonconvex functions which just need an additional squared
  $\ell^2$ norm to become convex (also called $\omega$-semiconvex
  functions as defined in Section \ref{sec:theory}), Artina, Fornasier
  and Solombrino proposed an Augmented Lagrangian based scheme for
  constrained problems and proved convergence to critical points
  \cite{fornasier}. 



  The drawback of that approach, however, would be that the resulting
  scheme requires the solution of an inner convex problem at each
  iteration which might become costly.

\item Along the same lines, there exist approaches in case the
  functional can be written as the difference of two convex
  functions. For instance, if $F(Ku) + \frac{\omega}{2}\|Ku\|_2^2$ is
  convex, then the difference of convex functions algorithm (DCA, see
  e.g. \cite[Algorithm 1]{phamdinh}) would be
  \begin{align*} 
    u_{n+1} = \underset{u} \argmin G(u) + F(Ku) +
    \frac{\omega}{2}\|Ku\|_2^2 - \omega\langle K^TKu^n, u \rangle.
  \end{align*}

  In this case the convergence to a critical point can be proven under
  mild conditions (cf.~\cite{An-Tao-05,phamdinh}). Once more, the scheme
  requires the solution of a convex optimization problem at each
  iteration and will therefore be rather costly. Often times, the problem
  with these approaches is that it is not clear how many iterations are
  required to solve the inner convex problem.

\item If $G$ is convex and $F$ is well approximated from above by a
  quadratic or a $\ell^1$ norm, there exist the iteratively reweighted least squares
  and iteratively reweighted $\ell^1$ algorithms. These methods solve a sequence
  of convex problems which majorize the original nonconvex cost function. The latter method has been recently
  generalized by Ochs et. al.~\cite{IRL1} to handle linearly constrained
  nonconvex problems which can be written as the sum of a convex and a concave function. Furthermore they prove convergence to a 
  stationary point if the concave term is smooth. While in principle it is still required to solve
  a sequence of convex subproblems, they precisely specify to which accuracy
  each subproblem has to be solved which makes their algorithm very efficient in practice.

\item Another track of algorithms are alternating minimization methods which
  are based on the well-known quadratic penalty method. In the classic
  quadratic penalty method the constrained cost function is augmented by a quadratic term
  that penalizes constraint violation in order to arrive at an
  unconstrained optimization problem:
  \begin{equation}
    \underset{u, g} \min \enskip G(u) + F(g) + \frac{\tau}{2} \|g - K u\|_2^2.
    \label{eq:problem_general_qd}
  \end{equation}
  This augmented cost function is then solved for increasing values of
  $\tau \rightarrow \infty$, see e.g. \cite{nikolova10}. Since joint
  minimization over $u$ and $g$ is difficult, minimization is usually
  carried out in an alternating fashion. Variations exist where the
  alternating minimizations are repeated several times before increasing
  the penalty parameter $\tau$, see e.g. \cite{HyperLaplacian} for
  details. We refer the reader to \cite{attouch10} and the references
  therein for a discussion on the convergence analysis for this class of
  methods. 

\item Finally, there exist some theory for the convergence of the
  PDHG method in the cases that the nonconvexity can be isolated in an
  operator with certain properties \cite{Valkonen-14}, which is
  different from the problem we are investigating in this paper.
\end{itemize}
The above approaches that can deal with constraints or other
nondifferentiabilities all have the disadvantage, that each step of
the minimization algorithm involves the solution of a convex problem,
and hence each step is as expensive as a full typical minimization
algorithm such as the PDHG method. This is the reason why some authors
have started applying popular convex optimization methods mentioned in
the previous section to nonconvex problems as if they were convex
(e.g. \cite{Chartrand-13, Chartrand-12, Esser-Zhang-14, Ozolins14}). Although the
results obtained in practise are convincing, very little is known
about the convergence of these methods. One theoretical
result we would like to point out is the one by Zhang and Esser in
\cite{Esser-Zhang-14}, where boundedness results and convergence under
an additional (iteration dependend) assumption is shown. While
\cite{Esser-Zhang-14} investigated the use of the PDHG method for a
particular type of nonconvex $G$ and convex $F$, we will reproduce and
extend their results for nonconvex $F$ and convex $G$.

The algorithm we are investigating has been initially proposed by
Strekalovskiy and Cremers \cite{MS-tr} and is itself is a reformulation
of \cite{ZhuChan08, Pock-et-al-iccv09, Chambolle-Pock-jmiv11} to make
it applicable to nonconvex $F$. A detailed description as well as a
derivation and its relation to the PDHG method is given in the next
section.

\subsection{The Algorithm}
We study the following algorithm that aims at minimizing \eqref{eq:optiProb} for $\omega$-semiconvex $F$ with $\omega > 0$ as defined in Section~\ref{sec:theory}. Given an $(u^0, q^0) \in X \times Y$ and for $\bar u^0 = u^0$, $\sigma \geq 2 \omega$, $\tau \sigma \norm{K}^2 \leq 1$, $\tau > 0$, $\sigma > 0$, $\theta \in [0,1]$, iterate for all $n \geq 0$:
\begin{equation}
  \begin{aligned}
    g^{n+1} &= \underset{g} \argmin \frac{\sigma}{2}\|g - K\bar{u}^n\|^2_2  - \langle g, q^n \rangle + F(g),\\
    q^{n+1} &= q^n + \sigma(K\bar{u}^n - g^{n+1}), \\
    u^{n+1} &= \underset{u} \argmin \frac{1}{2\tau} \|u - u^n\|_2^2 + \langle Ku, q^{n+1}\rangle + G(u), \\
    \bar{u}^{n+1} &= u^{n+1} + \theta(u^{n+1} - u^n).
  \end{aligned}
  \label{eq:firstFormulation}
\end{equation}
For the above update scheme to be well defined in the absence of convexity, we need
existence and uniqueness of a minimizer for the minimization problem in $g$. We will see that
this is fulfilled for $\omega$-semiconvex $F$ and for all $\sigma > \omega$. 
Note that the unusual step size criterion on $\sigma$, i.e. $\sigma\geq 2\omega$, will follow from our main theoretical convergence result in Section~\ref{sec:theory}. 

For \emph{convex} $F$, the above algorithm is exactly the primal-dual
hybrid gradient method as proposed in 
\cite{Pock-et-al-iccv09,Chambolle-Pock-jmiv11}. If we stack the primal
and dual variable $z^n := (u^n, \enskip q^n)^T$ and start the iteration
scheme with the primal variable update (which is equivalent to
\eqref{eq:firstFormulation} for a different initial value $u^0$) our
reformulation takes the following simple form:
\begin{equation}
  0 \in T(z^{n+1}) + \M (z^{n+1} - z^{n})
  \label{eq:compactFormulation}
\end{equation}
with
\begin{equation}
  T = \begin{pmatrix}
    \partial G & K^T \\
    -K & (\partial F)^{-1}
  \end{pmatrix}, \qquad
  \M = \begin{pmatrix}
    \tau^{-1} I & -K^T \\
    -\theta K & \sigma^{-1} I
  \end{pmatrix},
\end{equation}
where $(\partial F)^{-1}$ denotes the inverse of the set-valued
operator $\partial F$. To see this, we use the identity $(\partial F)^{-1} = (\sigma I - \sigma (I +
\sigma^{-1} \partial F)^{-1})^{-1} - \sigma^{-1} I$ (see
\cite[Prop. 23.6 (ii)]{BauschkeCombettesBook}), and writing out
\eqref{eq:compactFormulation} with that, we again obtain the algorithm
\begin{equation}
  \begin{aligned}
    u^{n+1} &= \rslv{\tau \partial G} \left(u^n - \tau K^T q^n \right), \\
    q^{n+1} &=  \rslv{\sigma (\partial F)^{-1}} \left(q^n + \sigma K (2 u^{n+1} - u^n) \right) \\
          &= q^n + \sigma \bigl( K \left( 2 u^{n+1} - u^n \right) - \underbrace{\rslv{\sigma^{-1} \partial F} \left(K \left(2 u^{n+1} - u^n \right) + \sigma^{-1} q^n \right)}_{=:g^{n+1}} \bigr) ,
  \end{aligned}
\end{equation}
which is exactly \eqref{eq:firstFormulation} for $\theta=1$ and with the order of primal and dual update reversed.

If $F$ is proper, convex and lower semicontinuous it
readily follows that $(\partial F)^{-1} = \partial F^*$ and we arrive at the
iteration scheme described in \cite{Pock-Chambolle-iccv11}. In that
case it is easy to check that $T$ is a maximally monotone operator,
and since for $\theta = 1$ the matrix $\M$ is symmetric and positive
definite \cite[Remark 1]{Pock-Chambolle-iccv11} it is well
known that the algorithm is just a special case of the classical
proximal point algorithm in a different norm. Thus it converges to a
saddle point $\widehat z \in X \times Y$ with $0 \in T \, \widehat z$.

In that sense we can interpret $(\partial F)^{-1}$ as a natural
generalization of the subdifferential of the convex conjugate
$\partial F^*$ to nonconvex functions. However, for general
nonconvex $F$ the operator $T$ fails to be monotone, and thus the
convergence analysis becomes more complicated. There exist
convergence results for the proximal point algorithm for nonmonotone
operators, in particular for operators whose inverse $T^{-1}$ is
$\rho$-hypomonotone \cite{proxNonmonotone1,proxNonmonotone2}. While for certain choices
of $F$ and $G$ we can show that $T^{-1}$ is $\rho$-hypomonotone, the
results obtained by this analysis are much weaker than the ones
provided in the rest of this paper. Analyizing the algorithm from this
set-valued operator point of view is left for further future work.

Additionally, notice that for $\theta=0$, the primal-dual algorithm
\eqref{eq:firstFormulation} is remarkably similar to the alternating
direction method of multipliers (ADMM) which we obtain by replacing
$\frac{1}{2 \tau} \| u - u^n\|^2$ by $\frac{\sigma}{2}\|Ku -
g^{n+1}\|^2$ in the update for $u^{n+1}$. The updates for $g^{n+1}$
and $q^{n+1}$ are exactly the same. Connections of this type have been
pointed out in \cite{Esser-Zhang-Chan-10} where the method we are
considering here was interpreted in the sense of a preconditioned ADMM
algorithm. Further generalizations for simplifying subproblems were
investigated in \cite{Deng-Yin-2014}.

\subsection{Contribution}
Our contribution in this paper is to present a generalized usage of
the algorithm proposed in \cite{MS-tr} and study its convergence
behavior. We can prove the convergence for a certain choice of
parameters in case the strong convexity of $G$ dominates the
semiconvexity of the other term $F$. We find an simple sufficient
requirement for the step size parameter, namely that the step size for
the minimization in $g$ has to be twice as large as necessary to make
the subproblem convex and show that this criterion is also necessary by
constructing an example for a diverging algorithm (for an overall
convex energy) in case the step size requirement is violated. In a
more general setting we can state the boundedness of the iterates
under mild conditions, and can show a result similar to the one
obtained in \cite{Esser-Zhang-14} for nonconvex functions ~$G$. 
Finally, we demonstrate the efficiency of the proposed method in
several numerical examples which show that the convergence of the
algorithm often goes beyond the theoretical guarantees.

\section{Theory}
\label{sec:theory}
\subsection{Mathematical preliminaries}
Before going into the analysis of the algorithm, let us recall some definitions that give sense to the proposed algorithm and the following analysis. 

\begin{defi}[Effective domain]
  Let $E : X \rightarrow \Rext$. We call the set
  $$ \text{dom}(E) := \{ u \in X ~|~ E(u) < \infty \}$$
  the effective domain of $E$. If $\text{dom}(E)$ is 
  a nonempty set, then we call $E$ proper.
\end{defi}

\begin{defi}[{Subdifferential, cf.~\cite[Definition 8.3]{Rockafellar-Variational-Analysis}}]
  \label{def:subdiff}
  Let $X$ be a finite dimensional Hilbert space and $E: X \rightarrow \Rext$ a lower semicontinuous functional. We say that $\xi \in X$ belongs to the (regular) subdifferential $\partial E(g)$ of $E$ at a point $g \in \text{dom}(E)$ if and only if 
  $$ \underset{\substack{\rho \rightarrow g \\ \rho \neq g} }{\lim \inf} \enskip \frac{E(\rho) - E(g) - \langle \xi, \rho - g \rangle}{\|\rho-g\|} \geq 0. $$
  For $g \not\in \text{dom}(E)$ we define $\partial E(g) = \emptyset$.
\end{defi}
It is well known that this definition extends (and is consistent with) the usual definition of the subdifferential for convex functions:
$$ \partial E(g) = \{ \xi \in X ~|~ E(\rho) - E(g) - \langle \xi, \rho - g \rangle \geq 0 \}, ~~~ g \in \text{dom}(E).$$
For illustrative purposes Fig.~\ref{fig:1dSubgrads} shows some examples of subgradients for one dimensional functions. 

\figSubdifferential

\begin{rem}[{Sum of subdifferentials, from \cite{fornasier}, \cite[Exercise 8.8(c)]{Rockafellar-Variational-Analysis}}]
  \label{rem:sumOfSub}
  We say that $E: X \rightarrow \Rext$ is a $C^1$ perturbation of a
  lower semicontinuous function, if $E = \widehat F + Q$ for $\widehat F : X \rightarrow
  \Rext$ being lower semicontinuous and $Q : X \rightarrow \R$ being of
  class $C^1$. For this type of functions the decomposition
  $$\partial E(u) = \partial \widehat F(u) + \partial Q(u) $$
  holds true with the subdifferential $\partial$ from Definition~\ref{def:subdiff}.
\end{rem}

\begin{defi}[Critical point]
  Let $E: X \rightarrow \Rext$ be a lower semicontinuous function. We say that $u \in X$ is a critical point if $0 \in \partial E(u)$. 
\end{defi}

\begin{defi}[Semiconvexity and strong convexity, from \cite{fornasier}]$ $ \\ \vspace{-0.5cm}
  \begin{itemize}
  \item We call a lower semicontinuous function $E: X \rightarrow \Rext$ $\omega$-semiconvex if $E + \frac{\omega}{2} \| \cdot \|^2$ is convex. 
  \item We call a lower semicontinuous function $E: X \rightarrow \Rext$ $c$-strongly convex, if for all $u,v \in H$, $q \in \partial E(u)$, $\xi \in \partial E(v)$, it holds that
    $$\langle u-v, q - \xi \rangle \geq c \|u-v\|^2.$$
  \end{itemize}
\end{defi}
One typically uses the term $c$-strongly convex for $c>0$ and the term $\omega$-semiconvex for $\omega \geq 0$, but if one allows negative values for the constants, it is well-known that both definitions can be used to express the same thing.
\begin{rem}
  \label{lem:equivalenceConvexityTypes}
  An $\omega$-semiconvex function is $c$-strongly convex with $c = -\omega$. A $c$-strongly convex function is $\omega$-semiconvex with $\omega = -c$. For example, $u \mapsto \frac{c}{2} \norm{u}^2$ is c-strongly convex and $y \mapsto -\frac{\omega}{2} \norm{y}^2$ is $\omega$-semiconvex.
\end{rem}
%

\begin{lemma}
  Let $E(u) = (F \circ K)(u) + G(u)$ be the sum of a proper, lower semicontinuous $\omega$-semiconvex function $F \circ K$ (with $K : X \rightarrow Y$ being a linear operator) and a proper, lower semicontinuous convex function $G$. Additionally, let $\text{dom}(F \circ K) \cap \text{dom}(G) \neq \emptyset$. Then the following holds true
  $$ \partial E(u) = K^T (\partial F)(Ku) + \partial G(u). $$
  \label{lem:sumOfSubdiffGeneral}
\end{lemma}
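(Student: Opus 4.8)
The plan is to establish the identity in two independent pieces, a sum rule $\partial(F\circ K+G)(u)=\partial(F\circ K)(u)+\partial G(u)$ and a chain rule $\partial(F\circ K)(u)=K^T(\partial F)(Ku)$. For the regular subdifferential of Definition~\ref{def:subdiff} the inclusions ``$\supseteq$'' come for free: adding the two $\liminf$ inequalities that define membership in $\partial(F\circ K)(u)$ and $\partial G(u)$ and using $\liminf(a+b)\ge\liminf a+\liminf b$ gives $\partial(F\circ K)(u)+\partial G(u)\subseteq\partial E(u)$; and if $\xi\in\partial F(Ku)$, then the identity $\langle K^T\xi,\rho-u\rangle=\langle\xi,K\rho-Ku\rangle$ combined with $\norm{K\rho-Ku}\le\norm{K}\,\norm{\rho-u}$ shows $K^T\xi\in\partial(F\circ K)(u)$. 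Hence the real content is the reverse inclusion, and the strategy is to obtain it by turning every nonconvex object into a convex one via an $\frac{\omega}{2}\norm{\cdot}^2$ shift and then invoking classical convex calculus.

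For the sum rule I would argue as follows. As $F\circ K$ is $\omega$-semiconvex and $G$ is convex, the function $C:=F\circ K+G+\frac{\omega}{2}\norm{\cdot}^2$ is proper, lower semicontinuous and convex. Since $-\frac{\omega}{2}\norm{\cdot}^2$ is of class $C^1$, Remark~\ref{rem:sumOfSub} gives $\partial E(u)=\partial C(u)-\omega u$. Applying the Moreau--Rockafellar sum rule to the two convex functions $F\circ K+\frac{\omega}{2}\norm{\cdot}^2$ and $G$ yields $\partial C(u)=\partial\bigl(F\circ K+\frac{\omega}{2}\norm{\cdot}^2\bigr)(u)+\partial G(u)$, and one further use of Remark~\ref{rem:sumOfSub} peels off the quadratic, $\partial\bigl(F\circ K+\frac{\omega}{2}\norm{\cdot}^2\bigr)(u)=\partial(F\circ K)(u)+\omega u$. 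Substituting back, the two $\omega u$ contributions cancel and leave $\partial E(u)=\partial(F\circ K)(u)+\partial G(u)$.

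The chain rule is handled by the same device on $Y$. Writing $\tilde F:=F+\frac{\omega}{2}\norm{\cdot}^2$, which is convex, Remark~\ref{rem:sumOfSub} gives $\partial\tilde F(y)=\partial F(y)+\omega y$. The convex chain rule for the linear map then produces $\partial(\tilde F\circ K)(u)=K^T\partial\tilde F(Ku)=K^T\partial F(Ku)+\omega K^TKu$, whereas viewing $\tilde F\circ K=F\circ K+\frac{\omega}{2}\norm{Ku}^2$ as a $C^1$ perturbation of $F\circ K$ and using Remark~\ref{rem:sumOfSub} once more gives $\partial(\tilde F\circ K)(u)=\partial(F\circ K)(u)+\omega K^TKu$. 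Equating the two expressions, the $\omega K^TKu$ terms cancel and $\partial(F\circ K)(u)=K^T\partial F(Ku)$, which together with the sum rule closes the argument.

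The main obstacle is the legitimacy of the two borrowed convex results: the Moreau--Rockafellar sum rule and the convex chain rule for $K$ require constraint qualifications of relative-interior type (such as $\mathrm{ri}\,\mathrm{dom}(F\circ K)\cap\mathrm{ri}\,\mathrm{dom}\,G\neq\emptyset$ and $\mathrm{ran}\,K\cap\mathrm{ri}\,\mathrm{dom}\,F\neq\emptyset$), which are stronger than the bare hypothesis $\mathrm{dom}(F\circ K)\cap\mathrm{dom}\,G\neq\emptyset$; the careful part is checking that the $\omega$-shifts leave all effective domains unchanged so that these qualifications transfer between $F$, $\tilde F$, $F\circ K$ and $C$. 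A secondary point I would verify is that $F$ itself, and not merely $F\circ K$, is $\omega$-semiconvex on $Y$, since this is exactly what makes $\tilde F$ convex and thereby justifies the reduction used in the chain rule; this is consistent with the standing assumptions on $F$ maintained throughout the paper.
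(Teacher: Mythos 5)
Your proposal is correct and follows essentially the same route as the paper: the paper's three-line proof adds $\frac{\omega}{2}\|u\|^2$, invokes Remark~\ref{rem:sumOfSub} together with the convex sum rule, and absorbs the chain-rule identity $\partial(F\circ K)(u)=K^T(\partial F)(Ku)$ into its final equality without further comment. Your explicit justification of that last step via a second quadratic shift on $Y$ --- together with your observations that it really uses $\omega$-semiconvexity of $F$ itself rather than of $F\circ K$, and that the borrowed convex sum and chain rules need relative-interior qualifications beyond the bare domain-intersection hypothesis --- is only more careful than the published argument, which silently assumes both points.
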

\begin{proof}
  Due to Remark \ref{rem:sumOfSub}, as well as the result that the subdifferential of the sum of two convex functions is the sum of the subdifferentials if their domains have a nonempty intersection (cf.~\cite{Rockafellar}), we find that 
  \begin{equation*}
    \begin{aligned}
      \partial \left( G(u) + F(Ku) \right) &= \partial \left( G(u) + F(Ku) + \frac{\omega}{2}\|u\|^2\right) - \omega u\\
      &= \partial G(u) +  \partial \left(F(Ku) + \frac{\omega}{2}\|u\|^2\right) - \omega u\\
      &= \partial G(u) +  K^T (\partial F)(Ku).
    \end{aligned}
  \end{equation*}

\end{proof}
\subsection{Convergence Analysis}
Throughout the rest of the paper, we will assume that the assumptions of Lemma \ref{lem:sumOfSubdiffGeneral} hold. Particularly, we make the stronger assumption that $F$ itself is $\omega$-semiconvex, such that the subproblem in $g$ arising in \eqref{eq:firstFormulation} is strongly convex for $\sigma > \omega$ and the iterates are well defined. Thus, let $\sigma > \omega$ hold for the rest of this work. With the tools defined in the previous section, we can now state the optimality conditions for Algorithm \eqref{eq:firstFormulation} to be
\begin{align}
  0 &\in \sigma(g^{n+1}- K\bar{u}^n) + \partial F(g^{n+1}) - q^n, \\
  0 &\in \frac{1}{\tau}(u^{n+1} - u^n) + K^T q^{n+1} + \partial G(u^{n+1}).
\end{align}
The optimality conditions yield an interesting property of $q^n$. Using the definition of $q^{n}$ in the optimality condition for $g^{n}$ leads to  
\begin{equation}
  \label{eq:qIsSubgradient}
  q^n \in \partial F(g^n) \qquad \ \forall n \geq 1,
\end{equation}
such that the variable $q^n$ has an immediate interpretation. Notice that $q^n$ is one of the two main variables in the PDHG method in the convex case, where it is typically updated with a proximity operator involving the convex conjugate $F^*$. Formulating the algorithm in terms of an update of the primal variable $g$ allows us to apply the same iterative scheme in the nonconvex case. Notice that $F^*$ is still defined in the case where $F$ is nonconvex, however, $F^*$ will be convex independent of the convexity of $F$. Using a PDHG formulation involving $F^*$ would therefore implicitly use convex relaxation which is not always desirable. 



\subsubsection{Convergence Analysis: Semiconvex $+$ strongly convex}

In this subsection we will investigate the convergence in the special case, where $F$ is semi-convex, $G$ is strongly convex, and the strong convexity dominates the semiconvexity. Our main result is stated in the following theorem. For the sake of clarity and readability, we moved the corresponding proof to Appendix~\ref{lbl:appendixProof}. 

\begin{thm}
  \label{thm:convergence}
  If $F$ is $\omega$-semiconvex and $G$ $c$-strongly convex with a constant $c > \omega \|K\|^2$, then Algorithm \eqref{eq:firstFormulation} converges to the unique solution $\hat{u} = \arg \min_{\,u} \, G(u) + F(Ku)$ of \eqref{eq:optiProb} for $\sigma = 2 \omega$, $\tau \sigma \|K\|^2 \leq 1$, and any $\theta \in [0,1]$, with rate $\|u^n - \hat{u}\|^2 \leq C/n$.
\end{thm}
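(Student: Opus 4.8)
The plan is to show that an $\M$-weighted distance of the stacked iterate $z^n=(u^n,q^n)$ to the (unique) saddle point decreases at each step by at least a multiple of $\|u^{n+1}-\hat u\|^2$, and then to sum this decrease. First I would fix a dual certificate for the minimizer: applying Lemma~\ref{lem:sumOfSubdiffGeneral} at $\hat u$, the condition $0\in\partial E(\hat u)$ yields some $\hat q\in\partial F(\hat g)$, where $\hat g:=K\hat u$, with $-K^T\hat q\in\partial G(\hat u)$. On the algorithmic side I would use the two optimality inclusions of \eqref{eq:firstFormulation}, namely $q^{n+1}\in\partial F(g^{n+1})$ (this is \eqref{eq:qIsSubgradient} shifted) and $-\tfrac1\tau(u^{n+1}-u^n)-K^Tq^{n+1}\in\partial G(u^{n+1})$, together with the identity $g^{n+1}=K\bar u^n-\tfrac1\sigma(q^{n+1}-q^n)$ coming from the $q$-update.

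Next I would produce two monotonicity estimates. Testing $c$-strong convexity of $G$ on the pair $(u^{n+1},\hat u)$ gives $-\tfrac1\tau\iprod{u^{n+1}-\hat u}{u^{n+1}-u^n}-\iprod{K(u^{n+1}-\hat u)}{q^{n+1}-\hat q}\ge c\|u^{n+1}-\hat u\|^2$, while testing $\omega$-semiconvexity of $F$ (equivalently $(-\omega)$-monotonicity of $\partial F$) on $(g^{n+1},\hat g)$ gives $\iprod{g^{n+1}-\hat g}{q^{n+1}-\hat q}\ge-\omega\|g^{n+1}-\hat g\|^2$. I would substitute the identity for $g^{n+1}-\hat g$, expand every inner product via the polarization identity $2\iprod{a-b}{a-c}=\|a-b\|^2+\|a-c\|^2-\|b-c\|^2$, and add the two estimates. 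The bilinear $K$-coupling then collapses to a telescoping difference $P^n-P^{n+1}$, where $P^n:=\iprod{K(u^n-\hat u)}{q^n-\hat q}$, up to an extrapolation remainder proportional to $\theta\iprod{K(u^n-u^{n-1})}{q^n-\hat q}$.

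The two step-size conditions enter here sharply and almost purely algebraically. Expanding $\omega\|g^{n+1}-\hat g\|^2$ produces a term $\tfrac1{2\sigma}\|q^{n+1}-q^n\|^2$ exactly when $\sigma=2\omega$, and this cancels the $-\tfrac1{2\sigma}\|q^{n+1}-q^n\|^2$ left over from the dual proximal step; this is the algebraic reason the identity $\sigma=2\omega$ is precisely right, since a strict inequality would leave an uncompensated $\|q^{n+1}-q^n\|^2$. What survives on the unfavourable side is $\omega\|K(\bar u^n-\hat u)\|^2\le\omega\|K\|^2\|\bar u^n-\hat u\|^2$. Setting $V^n:=\tfrac1{2\tau}\|u^n-\hat u\|^2+\tfrac1{2\sigma}\|q^n-\hat q\|^2+P^n$, the constraint $\tau\sigma\|K\|^2\le1$ makes $V^n\ge0$ by the arithmetic--geometric inequality, which is exactly the positive semidefiniteness of the preconditioner $\M$. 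Collecting everything gives a recursion of the schematic form $V^{n+1}\le V^n-c\|u^{n+1}-\hat u\|^2+\omega\|K\|^2\|\bar u^n-\hat u\|^2-\tfrac1{2\tau}\|u^{n+1}-u^n\|^2+(\text{extrapolation remainder})$.

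Finally I would telescope: summing the recursion, the index-shifted quadratic terms combine with coefficient $c-\omega\|K\|^2>0$, which is exactly where the hypothesis $c>\omega\|K\|^2$ is used, yielding $\sum_n\|u^n-\hat u\|^2<\infty$ and monotone decay of the corrected energy; hence $u^n\to\hat u$, the unique minimizer by strong convexity. The rate $\|u^n-\hat u\|^2\le C/n$ then follows from the standard argument combining (quasi-)monotonicity of the distance with the finite-sum bound $n\,\|u^n-\hat u\|^2\lesssim\sum_{k\le n}\|u^k-\hat u\|^2\le C$. I expect the main obstacle to be the bookkeeping around the extrapolation term $\theta\iprod{K(u^n-u^{n-1})}{q^n-\hat q}$ and the index-shifted term $\omega\|K\|^2\|\bar u^n-\hat u\|^2$: these must be absorbed, uniformly in $\theta\in[0,1]$, by folding them into an augmented Lyapunov functional and spending both the descent slack $-\tfrac1{2\tau}\|u^{n+1}-u^n\|^2$ and the strict margin $c-\omega\|K\|^2$. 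Checking that all competing quadratic terms net out to a genuine per-step decrease -- and in particular that the corrected energy is monotone enough to produce the $1/n$ rate and not merely summability -- is the delicate part.
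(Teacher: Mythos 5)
Your skeleton coincides with the paper's own proof in all its main moves: the same dual certificate $\hat q\in\partial F(\hat g)$ with $-K^T\hat q\in\partial G(\hat u)$; the same monotonicity tests on the error pairs (the paper obtains these by subtracting the optimality conditions at $(\hat u,\hat g,\hat q)$ from those of the iterates and pairing with the errors, Lemma~\ref{lem:estimate}); the same Lyapunov quantity (your $V^n$ is exactly the telescoped $\M$-weighted distance, and your $P^n$ is the paper's cross term $\iprod{q_e^n}{Ku_e^n}$); and the three hypotheses play identical roles: $\sigma=2\omega$ gives the exact cancellation of the semiconvexity deficit (the paper cancels $-\omega\|g_e^{n+1}\|^2$ directly against the $+\frac{\sigma}{2}\|g_e^{n+1}\|^2$ produced by the $g$-optimality pairing, rather than expanding $g_e^{n+1}$ through the $q$-update, but it is the same algebra), $\tau\sigma\|K\|^2\le 1$ yields nonnegativity of the boundary terms via Young's inequality, and summing extracts the margin $c-\omega\|K\|^2$ together with the $1/n$ rate. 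For $\theta=0$ your plan is essentially the paper's proof.

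The gap is exactly where you flagged delicacy, and it is genuine: for $\theta\in(0,1]$ the remainder $\theta\iprod{K(u^n-u^{n-1})}{q^n-\hat q}$ cannot be absorbed by ``spending the descent slack $-\frac{1}{2\tau}\|u^{n+1}-u^n\|^2$ and the strict margin.'' Cauchy--Schwarz gives $\theta\|K\|\,\|u^n-u^{n-1}\|\,\|q_e^n\|$, and $\|q_e^n\|$ carries no per-step smallness: the recursion only telescopes $\frac{1}{2\sigma}\|q_e^n\|^2$, it never produces a negative multiple of it each step, so any purely quadratic absorption either fails outright or forces extra restrictions on $\tau$ and $\theta$ beyond the theorem's hypotheses (and similarly the inflation of $\omega\|K\|^2\|\bar u^n-\hat u\|^2$ over $\omega\|K\|^2\|u^n-\hat u\|^2$ would eat an $\epsilon$-fraction of a margin that may be arbitrarily small). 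The paper's device is different in kind (Lemma~\ref{lem:estimate2}): since $u^n$ minimizes its own subproblem, $\iprod{u^{n-1}-u^n}{K^Tq^n}\ge\frac{1}{2\tau}\|u^n-u^{n-1}\|^2+G(u^n)-G(u^{n-1})$, which converts the dangerous cross term into a telescoping $G$-difference plus exactly the slack needed (together with $\tau\sigma\|K\|^2\le1$ and the exact identity expanding $\|K\bar u_e^n\|^2$) to dominate the $(\theta+\theta^2)\|K(u^n-u^{n-1})\|^2$ terms; the leftover boundary term is then closed by strong convexity of $G$ at the stationary point, $G(u^N)+\iprod{K^T\hat q}{u^N}\ge G(\hat u)+\iprod{K^T\hat q}{\hat u}+\frac{c}{2}\|u^N-\hat u\|^2$. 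Without some such conversion of $q^n$ into primal quantities, your recursion does not close uniformly in $\theta\in[0,1]$. Two minor points: summability alone gives $C/n$ only together with the quasi-monotonicity you invoke (the paper is equally brief there), and $\sigma=2\omega$ is sufficient rather than pinned --- $\sigma>2\omega$ with $c>\frac{\sigma}{2}\|K\|^2$ also works and additionally gives convergence of $g^n$, cf.\ Remark~\ref{rem:gConvergence}.
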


Although the requirements of the theorem lead to the overall energy being strongly convex, this is to the best knowledge of the authors the first convergence result for an algorithm where a nonconvex part of the energy was separated from a convex one by variable splitting. We will show some examples for energies our above convergence result can be applied to in the numerical results Section~\ref{sec:experiments}. Regarding the Theorem~\ref{thm:convergence}, we'd like to remark the following:
\begin{rem}
  \label{rem:gConvergence}
  Theorem \ref{thm:convergence} states the convergence of the variable $u$ only, which is due to the choice $\sigma = 2 \omega$ which does not necessarily lead to a convergence for $g$ as we will see in Example~\ref{exm:divergence} below. Let us note that, however, we can modify Theorem \ref{thm:convergence} to also yield convergence of $g$ by choosing $\sigma > 2 \omega$, but small enough such that $c > \frac{\sigma \|K\|^2}{2}$. In the very first estimate of the proof, this leads to having terms of both forms, $\|u^{n+1}-\hat{u}\|^2$ as well as $\|g^{n+1}-\hat{g}\|^2$, with strictly positive factors. The remaining proof continues as before and one can conclude the convergence of $\|g^{n+1}-\hat{g}\|^2 \rightarrow 0$ at least as fast as $1/n$ as well. 
\end{rem}

Notice that the assumptions in Theorem \ref{thm:convergence} require $\sigma = 2 \omega$, i.e. $\sigma$ needs to be chosen twice as large as necessary to make the subproblem for the minimization in $g$ convex. It is interesting to see that this requirement is not based on a (possibly crude) estimate for the convergence. In fact, it is not only sufficient but also necessary, as the next proposition shows.
\begin{prop}
  \label{prop:divergenceExample}
  Let all assumptions for Theorem \ref{thm:convergence} be met, except that we choose $\sigma < 2 \omega$. Then there exists a problem for which the sequences of $q^n$ and $g^n$ of the proposed algorithm with $\theta = 0$ diverge for any fixed $\tau > 0$.
\end{prop}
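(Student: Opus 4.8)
The plan is to exhibit an explicit one-dimensional instance on which the auxiliary variables blow up geometrically, arranged so that the blow-up rate is completely independent of $\tau$. The cleanest choice is the scalar problem $X = Y = \mathbb{R}$ with the degenerate operator $K = 0$, the $\omega$-semiconvex function $F(g) = -\tfrac{\omega}{2} g^2$, and any $c$-strongly convex $G$, e.g. $G(u) = \tfrac{c}{2} u^2$ with $c > 0$. First I would verify that this instance satisfies every hypothesis of Theorem~\ref{thm:convergence} with the sole exception $\sigma < 2\omega$ (while still keeping $\sigma > \omega$, so that the $g$-subproblem remains strongly convex and the iterates stay well defined): indeed $F + \tfrac{\omega}{2}\|\cdot\|^2 \equiv 0$ is convex, $G$ is $c$-strongly convex, and since $\|K\| = 0$ the requirements $c > \omega\|K\|^2$ and $\tau\sigma\|K\|^2 \le 1$ reduce to $c > 0$ and $0 \le 1$, hence hold for \emph{every} $\tau > 0$.

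The key observation is that $K = 0$ decouples the dynamics: with $\theta = 0$ we have $\bar u^n = u^n$ and $K\bar u^n = 0$, so the $u$-update no longer feeds back into the $g$- and $q$-updates. I would then solve the two remaining subproblems in closed form. Since $F$ is the smooth quadratic $-\tfrac{\omega}{2}g^2$, the $g$-subproblem $\min_g \tfrac{\sigma}{2} g^2 - g\, q^n - \tfrac{\omega}{2} g^2$ is strongly convex (as $\sigma > \omega$), with stationarity condition $(\sigma - \omega) g^{n+1} = q^n$, giving $g^{n+1} = q^n / (\sigma - \omega)$. Substituting into $q^{n+1} = q^n - \sigma g^{n+1}$ yields the scalar recursion
\begin{equation*}
  q^{n+1} = \left(1 - \frac{\sigma}{\sigma - \omega}\right) q^n = -\frac{\omega}{\sigma - \omega}\, q^n .
\end{equation*}

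From here the conclusion is immediate. The factor $\tfrac{\omega}{\sigma-\omega}$ exceeds $1$ in modulus precisely when $\sigma - \omega < \omega$, i.e. exactly when $\sigma < 2\omega$. Choosing any initialization with $q^0 \neq 0$ then gives $|q^n| = \bigl(\tfrac{\omega}{\sigma-\omega}\bigr)^{n}|q^0| \to \infty$ and correspondingly $|g^n| = |q^{n-1}|/(\sigma-\omega) \to \infty$, while $\tau$ never enters these expressions at all, so divergence holds for every fixed $\tau > 0$. I would close by noting that $u^n$ itself still converges to $\arg\min G$, so the example is not a pathology of the whole scheme but precisely illustrates the mechanism behind Remark~\ref{rem:gConvergence}: the boundary choice $\sigma = 2\omega$ is exactly the threshold at which the dual/auxiliary iterates stop diverging.

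There is essentially no hard technical step; the only real decision — and the one I expect to matter — is the choice of example. A generic $K \neq 0$ would couple $u$, $g$ and $q$ into a $2\times 2$ linear iteration $M$ whose stability one must control \emph{uniformly} in $\tau$ via a Jury/Schur--Cohn analysis of $\det M = -\tfrac{\omega}{(\sigma-\omega)(1+\tau c)}$ and $\operatorname{tr} M$; this is doable but fiddly, and the uniformity over large $\tau$ becomes delicate. Taking $K = 0$ removes the $\tau$-dependent feedback entirely, which is exactly why it certifies divergence for all $\tau > 0$ in a single line.
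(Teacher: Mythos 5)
Your proof is correct, but it takes a genuinely different route from the paper. The paper's proof instantiates $K=\mathrm{id}$, $F(g)=-\tfrac12\|g\|^2$ (so $\omega=1$), $G(u)=\tfrac{c}{2}\|u\|^2$, eliminates $q^n=-g^n$ via \eqref{eq:qIsSubgradient}, and studies the resulting coupled $2\times 2$ linear iteration in $(g^n,u^n)$: for $\sigma<2$ one has $a_{1,1}=-\tfrac{1}{\sigma-1}<-1$, and by taking $c$ large the perturbing entries vanish, so an eigenvalue drops below $-1$ and, since neither coordinate axis is an eigenvector, \emph{both} $u^n$ and $g^n$ blow up --- a stronger phenomenon than the proposition literally asserts (even the primal iterate of a strongly convex overall energy diverges), at the cost of an eigenvalue estimate whose choice of $c$ is a priori $\tau$-dependent. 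You instead degenerate to $K=0$, which decouples the $(g,q)$-dynamics from $u$ entirely and yields the exact scalar recursion $q^{n+1}=-\tfrac{\omega}{\sigma-\omega}\,q^n$; this is sharp (the multiplier exceeds $1$ in modulus \emph{iff} $\sigma<2\omega$, with modulus exactly $1$ at the threshold $\sigma=2\omega$), manifestly uniform in $\tau$, and your calculations check out: the $g$-subproblem is strongly convex for $\sigma>\omega$, all hypotheses of Theorem \ref{thm:convergence} except $\sigma<2\omega$ hold trivially since $\|K\|=0$, and $u^{n+1}=u^n/(1+\tau c)\to 0$. What you give up is nondegeneracy: with $K=0$ the nonconvex term never enters the energy ($F(Ku)\equiv F(0)$), so a skeptical reader might call the instance vacuous --- though the proposition as stated does not exclude it, and your example in fact isolates exactly the mechanism the statement is about ($q^n$, $g^n$ diverging while $u^n$ converges, cf.\ Remark \ref{rem:gConvergence}). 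If you want to preempt the degeneracy objection at no extra cost, note that your decoupling trick survives $K\neq 0$: take $K=(1,1)^T$ and $q^0\in\ker K^T$, whereupon $K^Tq^n=0$ for all $n$, $u^n\equiv 0$, and the same recursion $q^{n+1}=-\tfrac{\omega}{\sigma-\omega}q^n$ appears --- this is precisely the paper's Example \ref{exm:divergence}, which you would then be repurposing as the proof of the proposition.
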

\begin{proof}
  Let us give a very simple example problem for which the algorithm diverges for $\sigma < 2 \omega$. Consider
  $$ \hat{u} = \arg \min_u ~ \frac{c}{2}\|u\|^2 - \frac{1}{2}\|u\|^2, $$
  Clearly, for $c > 1$ the problem is strongly convex and the solution is given by $\hat{u}=0$. We apply the proposed algorithm with $F(Ku) = - \frac{1}{2}\|u\|^2$, $K$ being the identity, $\|K\|=1$, and $G(u) = \frac{c}{2} \|u\|^2$ being $c$ strongly convex. We assume $\sigma < 2 \omega =2$, and can immediately see that $\sigma >1$ has to hold for the subproblem in $g$ to even have a minimizer, i.e. for the algorithm to be well defined. Note that $q^n \in \partial F(g^n) = \{ -g^n \}$ by \eqref{eq:qIsSubgradient} such that we can eliminate $q^n$ from the algorithm; a short computation shows that
  \begin{align*}
    g^{n+1} &= -\frac{1}{\sigma - 1} g^n + \frac{\sigma}{\sigma-1}u^n, \\
    u^{n+1} &= \frac{1}{\tau^{-1}+c}g^{n+1} + \frac{\tau^{-1}}{\tau^{-1}+c}u^n.
  \end{align*}
  Using the first formula to replace $g^{n+1}$ in the second equation admits the fixed point form
  \begin{align*}
    \left(
      \begin{array}{c}
        g^{n+1} \\
        u^{n+1}
      \end{array}
    \right) = 
    \left(
      \begin{array}{cc}
        a_{1,1} & a_{1,2} \\
        a_{2,1}  & a_{2,2}
      \end{array}
    \right)
    \left(
      \begin{array}{c}
        g^{n} \\
        u^{n}
      \end{array}
    \right).
  \end{align*}
  with
  \begin{align*}
    a_{1,1} =& \  -\frac{1}{\sigma - 1} \ \ \qquad \qquad 
    a_{1,2} = \  \frac{\sigma}{\sigma-1}\\
    a_{2,1} =&  \ - \frac{1}{(\tau^{-1}+c) (\sigma -1)}\ \ \
    a_{2,2} =  \ \frac{\tau^{-1}}{\tau^{-1}+c} + \frac{\sigma}{(\tau^{-1}+c) (\sigma -1)}
  \end{align*}
  Obviously, this iteration can diverge as soon as one eigenvalue of the matrix $A$ is larger than one -- simply choose the corresponding eigenvector as a starting point of the iteration. The eigenvalues of a $2\times 2$ matrix are given by
  \begin{align*}
    d_{1} =& \frac{a_{1,1}+a_{2,2}}{2} +  \sqrt{\left(\frac{a_{1,1}+a_{2,2}}{2}\right)^2 - (a_{1,1} a_{2,2} - a_{1,2}a_{2,1})} \\
    d_{2} =& \frac{a_{1,1}+a_{2,2}}{2} - \sqrt{\left(\frac{a_{1,1}+a_{2,2}}{2}\right)^2 - (a_{1,1} a_{2,2} - a_{1,2}a_{2,1})} 
  \end{align*}
  Due to $- (a_{1,1} a_{2,2} - a_{1,2}a_{2,1}) = \frac{\tau^{-1}}{(\sigma-1)(\tau^{-1}+c)}$, we know that $d_1$ and $d_2$ are real. Note that $a_{2,1} \stackrel{c \rightarrow \infty}{\rightarrow} 0$ and $a_{2,2} \stackrel{c \rightarrow \infty}{\rightarrow} 0$, such that we could choose $c$ large enough such that 
  $$d_2 = a_{1,1} + \varepsilon(c),$$
  with $\varepsilon(c)$ arbitrarily small. Assume we have picked some $\sigma<2$. Then $a_{1,1}<-1$ and we can pick a $c$ large enough such that $d_2<-1$ and the algorithm diverges. Notice that for any finite $c$, we have $a_{2,1}\neq 0$ and $a_{1,2} \neq 0$ such that neither $(1,0)^T$ nor $(0,1)^T$ are eigenvectors which means that both variables, $u^n$ and $g^n$, diverge.

\end{proof}
Note that in the above example, the algorithm can diverge although the total energy was strongly convex, which shows that the convergence result of Theorem \ref{thm:convergence} is not trivial. Additionally, we could see that choosing a large $\alpha$, i.e. making the total energy even more strongly convex, led to an algorithm that provably diverges. As a second example, let us also state that the case of $u^n$ converging without $g^n$ converging can also happen in practice and is not an artifact of the convergence estimates:
\begin{example}
  \label{exm:divergence}
  Consider
  $$ \hat{u} = \arg \min_u ~ \frac{c}{2}\|u\|^2 - \frac{1}{2}\|Ku\|^2, $$
  with $K= (1,1)^T$. Clearly, for $c > 2$ the problem is strongly convex and the solution is given by $\hat{u}=0$. If we apply the proposed algorithm with $F(Ku) = - \frac{1}{2}\|Ku\|^2$, $\theta = 0$, and start with $u^0 = 0$, $q^0 = (-1, 1)$, we obtain
  \begin{align*}
    g^{n+1} &= (\sigma - 1)^{-1} q^n, \\
    q^{n+1} &= - g^{n+1},\\
    u^{n+1} &= 0.
  \end{align*}
  Notice that we only need $\sigma \geq 1$ for the minimization problem in $g$ to be convex and $\sigma > 1$ for the problem to be strictly convex, coercive and the iterates to be well defined. However, $F(g)$ is $\omega$-semi convex with $\omega=1$, and Theorem \ref{thm:convergence} tells us that we need $\sigma \geq  2\omega$ for the convergence. Looking at the above iteration we see that indeed $g^{n+1} = (-1)^{n} (\sigma - 1)^{-(n+1)} q^0$ such that $g^n$ diverges for $\sigma  <2$. It is interesting to see that Theorem \ref{thm:convergence} states the convergence of the variable $u$ only, which is due to the choice $\sigma = 2 \omega$ which - as we can see here - does not need to lead to a convergence for $g$. As pointed out in remark \ref{rem:gConvergence} we need to choose a slightly larger $\sigma$ to also obtain the convergence in $g$, which is consistent with the behavior of the algorithm in this toy example. 

  In any case, this example as well as the previous proposition show, that the estimates used to prove the convergence of the algorithm seem to be sharp - at least in terms of the algorithm parameters. 
\end{example}

Although the assumptions in Theorem \ref{thm:convergence} seem to be rather restrictive it is interesting to see that there are some nontrivial optimization problems which meet the necessary requirements as we will see in the numerical experiments section. 

\subsubsection{Convergence Analysis: Discussion in the nonconvex case}
In the case where the energy is truly nonconvex, the situation is much more difficult and we do not have a full convergence result yet. Based on the example of the diverging algorithm in Proposition~\ref{prop:divergenceExample} we can already see that we will need some additional assumptions. Particularly, it seems that we need the update of $g$ to be somehow contractive - despite the nonconvexity. Still, there are a few things one can state for fairly general nonconvex functions. The following analysis is closely related to the analysis in \cite{Esser-Zhang-14}, where the primal-dual algorithm for nonconvex data fidelity term ($G$ in our notation) was analyzed. 

\begin{prop}
  \label{prop:boundedness}
  Let $F$ be a differentiable function whose derivative is uniformly
  bounded by some constant $b \geq \| \nabla F(g) \|$ for all
  $g$. Additionally, let there exist constants $a \in \mathbb{R}^+$ and
  $t\in \mathbb{R}^+$ with $t> b\|K\|$, such that for all $u$ with
  $\|u\|\geq a$, we have
  $$ G((1+\varepsilon)u) \geq G(u) + t\varepsilon \|u\|, \qquad \forall \varepsilon \geq 0.$$
  Then the sequence $(u^n, q^n, g^n)$ is bounded and therefore converges along subsequences. 
\end{prop}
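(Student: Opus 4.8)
The plan is to bound the three sequences one at a time, beginning with the dual variable, which comes essentially for free. Since $F$ is differentiable, $\partial F(g) = \{\nabla F(g)\}$ is single-valued, so the characterization \eqref{eq:qIsSubgradient}, namely $q^n \in \partial F(g^n)$, forces $q^n = \nabla F(g^n)$ and hence $\|q^n\| \le b$ for every $n \ge 1$. Thus the dual iterates are automatically bounded by the uniform derivative bound $b$, with no further work.

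The crux is bounding the primal iterates $u^n$ by means of the coercivity assumption on $G$. First I would translate the radial growth condition into a statement about the subdifferential: I claim that for every $u$ with $\|u\| > a$ and every $\xi \in \partial G(u)$ one has $\langle \xi, u\rangle \ge t\|u\|$. To see this, fix such a $u$ and subgradient $\xi$, pick $\varepsilon > 0$ small enough that $w := \tfrac{1}{1+\varepsilon}u$ still satisfies $\|w\| \ge a$, and sandwich $G(w)$ between two bounds. Convexity of $G$ gives the lower bound $G(w) \ge G(u) + \langle \xi, w - u\rangle = G(u) - \tfrac{\varepsilon}{1+\varepsilon}\langle \xi, u\rangle$, while the growth assumption applied at the base point $w$ (note $u = (1+\varepsilon)w$) gives the upper bound $G(w) \le G(u) - t\varepsilon\|w\|$. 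Comparing the two and simplifying yields $\langle \xi, u\rangle \ge t\|u\|$.

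With this in hand I would invoke the optimality condition for the $u$-update, namely that there is $\xi^{n+1} \in \partial G(u^{n+1})$ with $\tfrac{1}{\tau}(u^{n+1}-u^n) + K^T q^{n+1} + \xi^{n+1} = 0$. Testing this identity against $u^{n+1}$, applying the polarization identity to $\langle u^{n+1}-u^n, u^{n+1}\rangle$, the Cauchy--Schwarz bound $|\langle q^{n+1}, Ku^{n+1}\rangle| \le b\|K\|\|u^{n+1}\|$, and --- whenever $\|u^{n+1}\| > a$ --- the subgradient-coercivity bound just derived, I expect to reach
\[
  (t - b\|K\|)\|u^{n+1}\| + \tfrac{1}{2\tau}\|u^{n+1}\|^2 + \tfrac{1}{2\tau}\|u^{n+1}-u^n\|^2 \le \tfrac{1}{2\tau}\|u^n\|^2 .
\]
Here the hypothesis $t > b\|K\|$ is exactly what makes the first term nonnegative, so that $\|u^{n+1}\| \le \|u^n\|$ whenever $\|u^{n+1}\| > a$. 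In either case $\|u^{n+1}\| \le \max(a, \|u^n\|)$, and a trivial induction gives $\|u^n\| \le \max(a, \|u^0\|) =: R$ for all $n$.

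Finally, boundedness of $g^n$ follows immediately: solving the $q$-update for $g^{n+1} = K\bar u^n - \tfrac1\sigma(q^{n+1}-q^n)$ and using $\|\bar u^n\| \le (1+2\theta)R$ together with $\|q^{n+1}-q^n\| \le 2b$ bounds $g^{n+1}$. Since $X$ and $Y$ are finite dimensional, Bolzano--Weierstrass then yields convergence along subsequences. I expect the main obstacle to be the middle step: converting the scalar radial growth hypothesis into the inner-product bound $\langle \xi, u\rangle \ge t\|u\|$ for the particular subgradient produced by the algorithm, and then combining it with the cross term $\langle q^{n+1}, Ku^{n+1}\rangle$ so that precisely the condition $t > b\|K\|$ certifies the monotone decay of $\|u^n\|$. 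The role of differentiability of $F$ (rather than mere semiconvexity) is to guarantee the uniform bound $b$ on the $q^n$, which is what the coercivity of $G$ must dominate.
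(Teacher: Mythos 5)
Your proof is correct, but it takes a genuinely different route from the paper's. The paper argues by contradiction at the first iterate escaping the ball of radius $a$: it compares the energy of the minimizer $u^{n+1}$ with that of the radially shrunk competitor $\tilde{u}(\varepsilon) = \frac{1}{1+\varepsilon}u^{n+1}$, applying the growth condition directly to function values of $G$, expanding the quadratic term, and using the sign condition $\langle \tilde{u}(\varepsilon), \tilde{u}(\varepsilon)-u^n\rangle \geq 0$ (valid because the previous iterate still satisfies $\|u^n\|\leq a$) to show $E(\tilde{u}(\varepsilon)) < E(u^{n+1})$ for small $\varepsilon$. You instead first convert the radial growth hypothesis into the subgradient bound $\langle \xi, u\rangle \geq t\|u\|$ for $\|u\|>a$ --- your sandwich of $G(\tfrac{1}{1+\varepsilon}u)$ between the convex subgradient inequality and the growth condition is valid --- and then test the first-order optimality condition of the $u$-update against $u^{n+1}$, which yields the clean decay estimate $\|u^{n+1}\| \leq \max(a,\|u^n\|)$ and hence the explicit bound $\max(a,\|u^0\|)$ by induction. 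Your version buys quantitative monotonicity of $\|u^n\|$ beyond radius $a$, dispenses with the paper's parenthetical assumption $\|u^0\|<a$ and the first-escaping-iterate bookkeeping, and isolates transparently how $t > b\|K\|$ beats the cross term $\langle q^{n+1}, Ku^{n+1}\rangle$. What it costs is convexity of $G$: your key lemma (growth implies $\langle\xi,u\rangle\geq t\|u\|$) uses the convex subgradient inequality, whereas the paper's energy-comparison argument never invokes convexity of $G$ and therefore survives verbatim for nonconvex $G$ satisfying the same growth condition --- not an idle point, since this section deliberately parallels the Esser--Zhang analysis of nonconvex $G$. Within the paper's standing assumptions ($G$ proper, convex, lsc) your argument is fully rigorous; the treatment of $q^n$ via \eqref{eq:qIsSubgradient} and of $g^n$ via the $q$-update matches the paper, up to the trivial remark that for $n=0$ the bound on $g^1$ involves the fixed initialization $q^0$ rather than $2b$.
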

\begin{proof}
  Since $q^n = \nabla F(g^n)$ (see \eqref{eq:qIsSubgradient}), the boundedness of $q^n$ is trivial. We prove the boundedness of $u^n$ by contradiction. Assume there exists a $u^{n+1}$ with $\|u^{n+1}\|>a$. Let us pick the first iterate with this property (assuming $\|u^0\|<a$), which ensures that $\|u^n\|\leq a$. We know that $u^{n+1}$ minimizes 
  $$ E(u) = \frac{1}{2 \tau}\|u-u^n\|^2 + G(u) + \langle u, K^T \nabla F(g^{n+1})\rangle. $$
  Consider $\tilde{u}(\varepsilon) = \frac{1}{1+\varepsilon} u^{n+1}$. We will show that there exists a $ \tilde{u}(\varepsilon)$ which has a lower energy than $u^{n+1}$, thus leading to a contradiction. One finds that
  \begin{align*}
    G(u^{n+1}) = G((1+\varepsilon)\tilde{u}(\varepsilon)) \geq G( \tilde{u}(\varepsilon)) +\varepsilon t \|\tilde{u}(\varepsilon)\|
  \end{align*}
  for all $\varepsilon$ small enough such that $\|\tilde{u}(\varepsilon)\| \geq a$. For these $\varepsilon$ we have
  \begin{align*}
    &G(u^{n+1}) + \langle u^{n+1}, K^T \nabla F(g^{n+1})\rangle\\
    &\geq  G( \tilde{u}(\varepsilon)) + \langle \tilde{u}(\varepsilon), K^T \nabla F(g^{n+1})\rangle+ \varepsilon( t- c\|K\|) \|\tilde{u}(\varepsilon)\|\\
    &>  G( \tilde{u}(\varepsilon)) + \langle \tilde{u}(\varepsilon), K^T \nabla F(g^{n+1})\rangle.
  \end{align*}
  Additionally, one finds that
  \begin{align*}
    \|u^{n+1} - u^n\|^2 &= 
    \|(1+\varepsilon)\tilde{u}(\varepsilon) - u^n\|^2  \\&= \|\tilde{u}(\varepsilon) - u^n\|^2  + \varepsilon^2 \|\tilde{u}(\varepsilon)\|^2 + 2 \varepsilon \langle \tilde{u}(\varepsilon), \tilde{u}(\varepsilon) - u^{n}\rangle, 
  \end{align*}
  and observers that $\langle \tilde{u}(\varepsilon), \tilde{u}(\varepsilon) - u^{n}\rangle \geq 0$ at least as long as $\| \tilde{u}(\varepsilon)\| \geq \|u^n\|$, such that for $\varepsilon>0$ small enough we arrive at
  $$ E(u^{n+1}) > E(\tilde{u}(\varepsilon)),$$ which is a contradiction to $u^{n+1}$ being a minimizer of $E$. Therefore, $u^n$ remains bounded. Finally, notice that the boundedness of $u^n$ and $q^n$ together with the update $q^{n+1} = q^n + K\bar{u}^n - g^{n+1}$ immediately implies the boundedness of $g^n$. 
\end{proof}

\figBoundedness

The assumptions of Proposition \ref{prop:boundedness} are met for instance for image denoising with an $\ell^2$ fidelity term and smooth approximations to popular regularizations like truncated quadratic or TV$^q$. We will show some of these cases in our numerical results Section~\ref{sec:experiments}. Basically the condition ensures the coercivity of the problem which of course seems to be a reasonable criterion to obtain boundedness of the iterates. To get a better understanding of what the assumptions of Proposition \ref{prop:boundedness} mean we illustrate the idea in a 1D case in Fig.~\ref{fig:boundednessAssumption}.

Unfortunately, it is not clear that the accumulation points of the bounded sequence generated by Algorithm \eqref{eq:firstFormulation} are critical. For the convergence to a critical point we need the distance between successive iterates to vanish in the limit. The following result is similar to the observations by Zhang and Esser in \cite{Esser-Zhang-14}.

\begin{prop}
  If the sequences $(u^n)$, $(g^n)$ and $(q^n)$ remain bounded (and thus are convergent along subsequences), and we additionally have
that $\|u^{n+1} - u^n \| \rightarrow 0$ and $\|q^{n+1} - q^n\| \rightarrow 0$, then the iteration converges to critical points along subsequences. 
  \label{lem:distances}
\end{prop}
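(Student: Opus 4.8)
The plan is to show that every accumulation point $u^*$ of the bounded sequence is a critical point of $E = G + F\circ K$, i.e.\ that $0 \in \partial E(u^*)$, by passing to the limit in the optimality conditions of \eqref{eq:firstFormulation} and invoking the graph-closedness of the two subdifferentials together with Lemma~\ref{lem:sumOfSubdiffGeneral}. First I would use boundedness and Bolzano--Weierstrass to extract a subsequence $(n_k)$ along which $u^{n_k}\to u^*$ and $q^{n_k}\to q^*$. The hypotheses $\norm{u^{n+1}-u^n}\to 0$ and $\norm{q^{n+1}-q^n}\to 0$ then upgrade this to convergence of the shifted iterates, $u^{n_k+1}\to u^*$ and $q^{n_k+1}\to q^*$, and also give $\bar u^{n_k} = u^{n_k} + \theta(u^{n_k}-u^{n_k-1}) \to u^*$.

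The next step is to recover the constraint in the limit, namely that the shifted $g$-iterate converges to $Ku^*$. Rewriting the dual update in \eqref{eq:firstFormulation} as $K\bar u^n - g^{n+1} = \tfrac{1}{\sigma}(q^{n+1}-q^n)$ and passing to the limit along $(n_k)$, the right-hand side vanishes while $K\bar u^{n_k}\to Ku^*$, so that $g^{n_k+1}\to Ku^*$. An appealing feature of this route is that the limit of the shifted $g$-iterate is obtained directly from the $q$-update, so no separate argument that $\norm{g^{n+1}-g^n}\to 0$ is required.

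Now I would pass to the limit in the two optimality conditions. For the primal one, $-\tfrac{1}{\tau}(u^{n_k+1}-u^{n_k}) - K^T q^{n_k+1} \in \partial G(u^{n_k+1})$; since $G$ is proper, convex and lower semicontinuous, $\partial G$ is maximally monotone and hence has closed graph, and letting $k\to\infty$ yields $-K^T q^* \in \partial G(u^*)$. For the dual one I would use \eqref{eq:qIsSubgradient}, i.e.\ $q^{n_k+1} \in \partial F(g^{n_k+1})$. The difficulty here is that $F$ is only $\omega$-semiconvex, so $\partial F$ is not the subdifferential of a convex function and standard closedness does not apply directly. The remedy is to set $\widetilde F := F + \tfrac{\omega}{2}\norm{\cdot}^2$, which is convex and lower semicontinuous, and to use the equivalence $q\in\partial F(g) \Leftrightarrow q+\omega g\in\partial\widetilde F(g)$ furnished by Remark~\ref{rem:sumOfSub}. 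Applying graph-closedness of $\partial\widetilde F$ to $q^{n_k+1}+\omega g^{n_k+1} \to q^*+\omega Ku^*$ at the arguments $g^{n_k+1}\to Ku^*$ gives $q^*+\omega Ku^* \in \partial\widetilde F(Ku^*)$, that is $q^*\in\partial F(Ku^*)$.

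Finally, adding the two limit inclusions yields $0 = K^T q^* + (-K^T q^*) \in K^T\partial F(Ku^*) + \partial G(u^*)$, which by Lemma~\ref{lem:sumOfSubdiffGeneral} is exactly $\partial E(u^*)$; hence $u^*$ is critical, and the same argument applies to any accumulation point. I expect the main obstacle to be the limit passage through the nonconvex operator $\partial F$: the semiconvex shift to $\widetilde F$ is the essential device that reduces it to the convex (maximally monotone) case, and it must be paired with the verification that the \emph{shifted} $g$-iterate $g^{n_k+1}$ converges to $Ku^*$, so that the closedness of $\partial\widetilde F$ is evaluated at the correct limit point.
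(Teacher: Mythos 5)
Your proof is correct and follows essentially the same route as the paper's: pass to the limit in the two optimality conditions along a convergent subsequence, recover $g^{n_k+1}\to Ku^*$ from the dual update together with $\|q^{n+1}-q^n\|\to 0$, and combine the resulting inclusions via Lemma~\ref{lem:sumOfSubdiffGeneral}. Your one refinement is welcome: where the paper justifies $\hat q\in\partial F(\hat g)$ by appealing informally to the lower semicontinuity of $F$, you make the limit passage through the nonconvex $\partial F$ rigorous by shifting to the convex function $\widetilde F = F+\tfrac{\omega}{2}\norm{\cdot}^2$ and using the closed graph of its maximally monotone subdifferential.
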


\begin{proof}
  If we pick a convergent subsequence (denoted with indices $m_n$) that converges to a point $(\hat{u}, \hat{g}, \hat{q})$, we have 
  \begin{align*}
    &\|q^{m_n-1} - q^{m_n}\|= \sigma \|g^{m_n} - K\bar{u}^{m_n-1}\|\\ &\geq \sigma \left(\|g^{m_n} - Ku^{m_n}\| - \|Ku^{m_n} - Ku^{m_n-1}\| - \theta \|Ku^{m_n-1} - Ku^{m_n-2}\|  \right)
  \end{align*}
  which ensures that $K\hat{u} = \hat{g}$. Additionally, taking the limit of the equation 
  \begin{align*}
    -\frac{1}{\tau}(u^{m_n} - u^{m_n-1}) \in K^Tq^{m_n} + \partial G(u^{m_n}),
  \end{align*}
  shows that the accumulation point satisfies 
  $$ -K^T \hat{q} \in  \partial G(\hat{u}).$$
  Using that $\hat{q} \in \partial F(\hat{g})$ (equation \eqref{eq:qIsSubgradient} along with the lower semicontinuity of $F$), as well as $K\hat{u} = \hat{g}$ yields
  $$ K^T \partial F(K\hat{u}) + \partial G(\hat{u}) \ni 0, $$
  and coincides with the definition of $\hat{u}$ being a critical point. 
\end{proof}
Note that if the assumptions of Proposition~\ref{prop:boundedness} are satisfied, the assumptions of Proposition~\ref{lem:distances} are also fulfilled.


\if 0
\section{Proximal Point Analysis of the Algorithm}
\textcolor{red}{THIS SECTION WILL PROBABLY BE COMMENTED OUT}

Since algorithm \eqref{eq:firstFormulation} is just a special case of the proximal point algorithm in a certain norm, the aim of this section is to analyze the algorithm from that viewpoint. If $F$ and $G$ are convex, then $T$ is a monotone operator and convergence readily follows. There exist some generalized convergence results for the proximal point algorithm when $T$ fails to be monotone. In particular, if the inverse $T^{-1}$ is $\rho$-hypomonotone then the set of solutions $T^{-1}(0)$ is still convex and convergence can be shown for an appropriate scalar step size parameter $\gamma$. 

In this section we try to extend this analysis to matrix valued step sizes $\M$ and hope to show $\rho$-hypomonotonicty of $T^{-1}$ when the functions $F$ and $G$ are well behaved enough. 

First, we establish conditions on $\rho$ and $\M$ to establish convergence of the proximal point algorithm.
\begin{lemma}
  Let $T^{-1}$ be maximally $\rho$-hypomonotone. Then the iterates $(z^n)$ of algorithm \eqref{eq:firstFormulation} generated by the inclusion
  \begin{equation}
    0 \in T(z^{n+1}) + \M (z^{n+1} - z^n)
  \end{equation}
  converge to an element $z^* \in T^{-1}(0)$ if
  \begin{equation}
    \norm{M^{\frac{1}{2}}} \leq 1 / \sqrt{2\rho}.
  \end{equation}
\end{lemma}
\begin{proof}
  We start with the following algebraic equality in order to establish Fej\`er monotonicity of the sequence $(z^n)$:
  \begin{equation}
    \norm{z^{n+1} - z^*}^2_{\M} = \norm{z^{n} - z^*}^2_{\M} - \norm{z^n - z^{n+1}}^2_{\M} + 2\iprod{z^* - z^{n+1}}{z^n - z^{n+1}}_{\M}.
    \label{eq:algebraicId}
  \end{equation}
  Our goal will be to bound the inner product in equation \eqref{eq:algebraicId} from above. We start with the variational inequality formulation \textcolor{red}{(CHECK IF THIS WORKS ALSO FOR NONMONOTONE T)} of the algorithm 
  \begin{equation}
    \iprod{z - z^{n+1}}{T(z^{n+1}) + \M(z^{n+1} - z^n)} \geq 0, \qquad \forall z \in X \times Y.
  \end{equation}
  and with the specific choice of $z=z^*$ we have
  \begin{equation}
    \iprod{z^* - z^{n+1}}{T(z^{n+1})} \geq \iprod{z^* - z^{n+1}}{z^n - z^{n+1}}_{\M}.
  \end{equation}
  We can bound the term on the left hand-side from above since $T^{-1}$ is $\rho$-hypomonotone and $T(z^*) = 0$:
  \begin{equation}
    \begin{aligned}
      \iprod{z^* - z^{n+1}}{T(z^{n+1})} = -\iprod{z^* - z^{n+1}}{T(z^*) - T(z^{n+1})} \\ \leq \rho \norm{T(z^*) - T(z^{n+1})}^2 = \rho \norm{T(z^{n+1})}^2.
    \end{aligned}
  \end{equation}
  Plugging that into \eqref{eq:algebraicId} we arrive at 
  \begin{equation}
    \begin{aligned}
      \norm{z^{n+1} - z^*}^2_{\M} &\leq \norm{z^{n} - z^*}^2_{\M} - \norm{z^n - z^{n+1}}^2_{\M} + 2 \rho \norm{T(z^{n+1})}^2\\
      &= \norm{z^{n} - z^*}^2_{\M} - \norm{z^n - z^{n+1}}^2_{\M} + 2 \rho \norm{\M(z^n - z^{n+1})}^2\\
      &\leq \norm{z^{n} - z^*}^2_{\M} + (2 \rho \norm{\M^{\frac{1}{2}}}^2 - 1) \norm{z^n - z^{n+1}}_{\M}^2.
    \end{aligned}
    \label{eq:fejer1}
  \end{equation}
  We see that if $\rho \leq 1 / (2 \norm{\M^{\frac{1}{2}}}^2)$ we have Fej\`er monotonicity of the sequence $(z^n)$. This implies boundedness and convergence of the sequence $(\norm{z^n - z^*}_{\M})$. Since $T^{-1}$ is maximally $\rho$-hypomonotone it follows that $z^n \rightarrow z^*$ since the graph of $T$ is closed. \textcolor{red}{(CHECK IF LAST SENTENCE IS TRUE AGAIN)}
\end{proof}

\begin{lemma}
  As an upper bound for $\norm{\M^{\frac{1}{2}}}$ in terms of $\sigma$, $\tau$ and $K$ we have 
  \begin{equation}
    \norm{\M^{\frac{1}{2}}} \leq ...
  \end{equation}
\end{lemma}
\begin{proof}
  Let $z=(x,y) \in X \times Y$ then,
  \begin{equation}
    \begin{aligned}
      \norm{\M^{\frac{1}{2}}} = \underset{\norm{z} = 1} \sup \, \sqrt{\iprod{z}{\M z}} = \underset{\norm{z} = 1} \sup \, \sqrt{\frac{1}{\tau} \norm{x}^2 + \frac{1}{\sigma} \norm{y}^2 - 2 \iprod{y}{Kx}}.
    \end{aligned}
  \end{equation}
\end{proof}

\begin{lemma}
  If $G$ is $\gamma$-strongly-convex and $F$ is $\omega$-semiconvex with $\gamma > \omega \norm{K}^2$, then the operator $T^{-1}$ is maximally $\rho$-hypomonotone, i.e.
  \begin{equation}
    \iprod{z_1 - z_2}{T(z_1) - T(z_2)} \geq -\rho \norm{T(z_1) - T(z_2)}^2
    \label{eq:hypomonotone1}
  \end{equation}
  for all $\rho \geq \frac{\gamma \omega}{\gamma - \omega \norm{K}^2}$.  In particular, for $\gamma = 2 \omega \norm{K}^2$ we have that $\rho \geq 2 \omega$.
\end{lemma}
\begin{proof}
  Writing out equation \eqref{eq:hypomonotone1} yields
  \begin{equation}
    \begin{aligned}
      \iprod{x_1 - x_2}{\partial G(x_1) - \partial G(x_2)} + \iprod{y_1 - y_2}{(\nabla F)^{-1}(y_1) - (\nabla F)^{-1}(y_2)}\\ +\rho\norm{\partial G(x_1) - \partial G(x_2) + K^T(y_1 - y_2)}^2\\ + \rho\norm{(\nabla F)^{-1}(y_1) - (\nabla F)^{-1}(y_2) - K(x_1 - x_2)}^2 \geq 0
    \end{aligned}
    \label{eq:hypomonotone2}
  \end{equation}
  Using $\gamma$-strongly-convexity of $G$ and $\omega$-semiconvexity of $F$ we can bound the left side of \eqref{eq:hypomonotone2} from below by
  \begin{equation}
    \begin{aligned}
      ... \geq \frac{\gamma}{\norm{K}^2} \norm{K(x_1 - x_2)}^2 - \omega \norm{(\nabla F)^{-1}(y_1) - (\nabla F)^{-1}(y_2)}^2\\
      + \rho \norm{(\nabla F)^{-1}(y_1) - (\nabla F)^{-1}(y_2)}^2 + \rho \norm{K (x_1 - x_2)}^2\\
      - 2 \rho \norm{K (x_1 - x_2)} \norm{(\nabla F)^{-1}(y_1) - (\nabla F)^{-1}(y_2)} = ...
    \end{aligned}
    \label{eq:hypomonotone3}
  \end{equation}
  Introducting the notation $a := \norm{K(x_1 - x_2)}$ and $b := \norm{(\nabla F)^{-1}(y_1) - (\nabla F)^{-1}(y_2)}$ we see 
  \begin{equation}
    \begin{aligned}
      ... = (\frac{\gamma}{\norm{K}^2} + \rho) a^2 - 2 \rho \, a b + (\rho - \omega) b^2\\
      = (\frac{\gamma}{\norm{K}^2 (\rho - \omega)} + \frac{\rho}{\rho - \omega}) a^2 -2 (\frac{\rho}{\rho - \omega}) a b + b^2\\
      = c_1 a^2 - 2 c_2 ab + b^2.
    \end{aligned}
    \label{eq:hypomonotone4}
  \end{equation}
  We can bound that from below by zero if $c_1 \geq c_2^2$. Using that, and assuming $\gamma > \omega \norm{K}^2$ we have that $T^{-1}$ is $\rho$-hypomonotone for all $\rho$ satisfying
  \begin{equation}
    \rho \geq \frac{\gamma \omega}{\gamma - \omega \norm{K}^2}.
  \end{equation}
  \textcolor{red}{TODO: it is maybe possible to obtain a better bound for $\rho$ by also considering the term $\rho\norm{\partial G(x_1) - \partial G(x_2) + K^T(y_1 - y_2)}^2$.}
\end{proof}
\fi

\section{Numerical Experiments}

In this section we will provide different types of numerical examples. The first two subsections contain examples for which the overall function is strictly convex, but a splitting approach into a strongly convex and a nonconvex part clearly simplifies the numerical method, such that the primal-dual approach \eqref{eq:firstFormulation} can be applied with a convergence guarantee based on Theorem~\ref{thm:convergence}. Secondly, we consider the case of Mumford-Shah regularized denoising in the case where the overall energy is truly nonconvex. In this case we cannot guarantee the convergence a-priori, but can guarantee the boundedness of the iterates (Proposition~\ref{prop:boundedness}) and have a way to check a-posteriori if a critical point was found (based on Proposition~\ref{lem:distances}). Finally, we present numerical results on Mumford-Shah based inpainting as well as on image dithering to further illustrate the behavior of the algorithm in the fully nonconvex setting and to show that the numerical convergence often goes beyond the current theoretical guarantees.
\label{sec:experiments}

\subsection{Notation and Discretization}
Throughout the rest of this section $\Omega$ will denote a $d$-dimensional discretized rectangular domain. For images with $k$ channels $u : \Omega \rightarrow \R^k$ we denote the gradient discretization as $\nabla u : \Omega \rightarrow \R^{d \times k}$. We will make frequent use of the following norms for $g : \Omega \rightarrow \R^{d \times k}$:
$$
\norm{g}_{2,1} := \underset{x \in \Omega} \sum ~ \norm{g(x)}_2,
~~ \norm{g}_{2,2} := \sqrt{\underset{x \in \Omega} \sum ~ \norm{g(x)}_2^2},
$$
where $\norm{\cdot}_2$ denotes the Frobenius norm. In the following $\nabla$ is discretized by standard forward differences and we have the following bound on its norm $\norm{\nabla} \leq \sqrt{4 d}$ (see \cite{Chambolle-Pock-jmiv11}).

\subsection{Joint Denoising and Sharpening via Backwards Diffusion}
One of the most popular variational denoising methods is the Rudin-Osher-Fatemi (ROF) model \cite{Rudin-et-al-92} with the total variation (TV) as a regularizer. A possible interpretation in the vectorial case is given as
$$ \underset{u : \Omega \rightarrow \R^k} \min \enskip \frac{c}{2} \| u-f\|^2 + \| \nabla u \|_{2,1}. $$
Based on Lemma \ref{lem:equivalenceConvexityTypes} we can see that the energy to be minimized is $c$-strongly convex, which gives us the freedom to introduce further $\omega$-semiconvex terms and, as long as $c > \omega$, still be able to use the primal-dual splitting approach as an efficient minimization method. One possible example would be to incorporate a sharpening/edge enhancement term of the form $-\frac{\omega}{2}\|\nabla u \|_{2,2}^2$ with $\omega < c \|\nabla\|^{-2}$. The latter constraint is needed for the assumptions of Theorem \ref{thm:convergence} to hold and leads to \eqref{eq:denoiseAndSharpen} being a convex problem. Note that incorporating a term like $-\frac{\omega}{2}\|\nabla u \|_{2,2}^2$ into the energy can be interpreted as an implicit step for the backward heat equation. Thus, if a blur is assumed to follow a diffusion process, the term $-\frac{\omega}{2}\|\nabla u \|_{2,2}^2$ would aim at removing the blur. Similar approaches to image sharpening have been investigated in the literature, mostly in the context of inverse diffusion equations (cf.~\cite{gilboa02,gilboa04}). 

The full energy minimization approach for joint denoising and sharpening could be
\begin{align}
  \underset{u : \Omega \rightarrow \R^k} \min ~ \frac{c}{2} \| u-f\|^2 + \| \nabla u \|_{2,1} -\frac{\omega}{2}\|\nabla u \|_{2,2}^2 + \iota_{[0,1]}(u)
  \label{eq:denoiseAndSharpen}
\end{align}
where 
\begin{equation}
  \iota_{[0,1]}(u) = 
  \begin{cases}
    0, \qquad &0 \leq u(x) \leq 1, \enskip \forall x \in \Omega\\
    \infty, \qquad &\text{else,}
  \end{cases}
\end{equation}
restricts the range of $u$ to be between zero and one. Generally, we expect the total variation term to be dominant for small oscillations in the data, thus removing some of the noise, while the sharpening term will be dominant on large edges leading to an increased contrast. Notice that one can also iterate the above model by replacing $f$ with the previous iterate $u^k$ starting with $u^0=f$, which results in a combination of the TV flow \cite{Andreu00} and inverse diffusion. 

Notice that \eqref{eq:denoiseAndSharpen} is a good example for a functional which is convex but difficult to minimize without splittings that divide the energy into a strongly convex and a nonconvex part. A convex splitting would have to compute a proximity operator with respect to $ \frac{c}{2} \| u-f\|^2  -\frac{\omega}{2}\|\nabla u \|^2 + \iota_{[0,1]}(u)$, which is difficult to solve without an additional splitting. Forward-backward splittings have to use $\| \nabla u \|_{2,1} $ in the backward step, since it is nondifferentiable, such that one has to solve a full TV-denoising problem at each iteration. Similar problems occur in differences of convex approaches. 

To the best knowledge of the authors this manuscript is the first work to theoretically justify the splitting into a convex and a nonconvex part by considering the problem 
\begin{align}
 \underset{u, g} \min ~ \frac{c}{2} \| u-f\|^2 + \| g \|_{2,1} -\frac{\omega}{2}\|g \|_{2,2}^2 + \iota_{[0,1]}(u) \quad \text{subject to} \quad g = \nabla u.
  \label{eq:denoiseAndSharpenSplit}
\end{align}
Fig.~\ref{fig:denoiseAndSharpen} shows the result of the flow arising from \eqref{eq:denoiseAndSharpen} (called \textit{enhanced TV flow} in Fig.~\ref{fig:denoiseAndSharpen}) in comparison to a plain TV flow for $c = 30$ and $\omega = 0.7 \frac{c}{\|\nabla\|^2}$. While both, the TV and the inverse diffusion TV flow, are able to remove the noise from the image, we can see that in the course of the iteration, the inverse diffusion TV flow results remain significantly sharper. Strong image edges even get enhanced during the iterations, such that due to the nature of backward diffusion, the iterations have to be stopped at some point to avoid significant overshooting. 

We do not claim the above energy to be a state of the art image enhancement technique. However, the above experiment shows that we can add a certain amount of a semiconvexity to any strongly convex energy without limiting our abilities to tackle the resulting problem with an efficient splitting approach. We believe that the inclusion of such nonconvexities in the regularization might aid in the design of energy functionals that describe the desired effects even more accurately than purely convex ones. 

\figDenoiseAndSharpen

\subsection{Illumination Correction}
In the previous subsection, the linear operator in the nonconvex term was exactly the same as the one we would typically use for the splitting in solving a plain TV problem. Additionally, the proximity operator for the TV plus the nonconvex could be solved in closed form. In this subsection, we would like to use a toy example to show that this does not have to be the case and one could imagine adding nonconvex terms which are treated completely independent of the convex ones. Consider for instance terms of the form 
\begin{align}
  N(Au) = \|(Au-r)^2 - e \|_1,
  \label{eq:nonconvexIlluTerm}
\end{align}
where $A$ is a linear operator, $r$ and $e$ are given values, and the square is a componentwise operation. Due to the $\ell^1$ norm, this term has the interpretation that $(Au-r)^2 - e $ is sparse, i.e. that many components meet $((Au-r)_{i,j})^2 = e_{i,j}$. Looking at the function $N(Au)$ in 1D in Fig.~\ref{fig:n_of_u_example} it is obvious that such an $N$ is nonconvex and nondifferentiable for positive $e$. It is, however, $\omega$-semiconvex, such that adding $N(Au)$ with a sufficiently small weight to any strongly convex function leads to an overall convex energy and our convergence theory regarding a splitting into a convex and nonconvex part applies. 

\figShapeOfN

As a toy problem, consider again the ROF denoising model for three color channels ($k=3$), with the additional idea to perform a particular illumination correction, by requiring many gray values defined as red channel plus green channel plus blue channel over three, to be close either a number $b_1$ or a number $b_2$. In other words, we introduce the term $\|(\frac{1}{3}\sum_k u_{i,j,k} - r)^2 - e\|_1$, where $r$ and $e$ are just numbers in $\mathbb{R}$. For instance, for $r=0.6$ and $e=(0.3)^2$ the term would prefer many intensity values to be either $0.3$ or $0.9$. With this term we can modify the tonemapping; notice that intensity contrast enhancement is a special case obtained for $r = 0.5$ and $e>(0.5)^2$, where the relevant part of $N$ is mere the negative quadratic of the intensity. 

The energy of our variational model reads as follows:
\begin{align}
  \underset{u : \Omega \rightarrow \R^3} \min ~ \frac{c}{2} \| u-f\|^2 + \| \nabla u \|_{2,1} + \frac{\omega}{2} N(Au)+ \iota_{[0,1]}(u).
  \label{eq:denoiseAndIllucorrect}
\end{align}
Writing the color image $u$ in vector form, we can introduce a new variable $g$ along with the constraint $Ku = g$, where 
\begin{align}
  K = \left (
    \begin{array}{ccc}
      \nabla & 0 & 0 \\
      0 & \nabla & 0 \\
      0 & 0 & \nabla \\
      & A &
    \end{array}
  \right ) ,
\end{align}
and $\nabla$ denotes the discrete gradient operator of a single channel image in matrix form. We can apply the proposed algorithm which then decouples the minimization of the nonconvex term involving $N$ from the rest of the energy and all proximity that need to be evaluated in the algorithm have a closed from solution. 

\figIllucorrectionOne

Fig.~\ref{fig:denoiseAndIllucorrect} shows some results of the method obtained by minimizing \eqref{eq:denoiseAndIllucorrect}. We compare the original noisy image with the one obtained by \eqref{eq:denoiseAndIllucorrect} without the nonconvex term (i.e. $\omega = 0$), and with the nonconvex term (i.e. $\omega = \alpha - \varepsilon$ for a small $\varepsilon$) for $r=0.6$, $e=(0.3)^2$. It is interesting to see how strong the effect of the nonconvex term is: We see big intensity changes and particularly a stronger denoising effect in the background. To illustrate that our idea of using \eqref{eq:nonconvexIlluTerm} to obtain many entries that meet $((Au-r)_{i,j})^2 = e_{i,j}$, we plotted the sorted pixel intensities of the plain TV denoising solution as well as the  sorted pixel intensities of our model with the additional nonconvex term. As we can see there is a drastic increase in the number of pixel for which the intensity is exactly  $0.3$ or $0.9$.

Again, the exaggerated effect of Fig.~\ref{fig:denoiseAndIllucorrect} was for illustration purposes. One could imagine cases where the original image has a low dynamic range and does not fully use the possible range of values from zero to one. In this case the approach seems more reasonable and we obtain the results shown in Fig.~\ref{fig:denoiseAndIllucorrect2}. 

\figIllucorrectionTwo

While we are not certain if the idea of incorporating the nonconvex term for illumination correction has wide applicability, our experiments do show that whenever one has a strongly convex energy, one has the freedom to model any desirable image property with a semiconvex term without complicating the minimization algorithm. As we have seen, the condition of being semiconvex is rather weak (particularly when working in a bounded domain) such that one has a great freedom in the design of such terms. Furthermore, we have seen that the effect of these additional nonconvex terms is by no means negligible. Additionally we would like to mention, that since the effect of the nonconvex terms is influenced by their weight and therefore by the strong convexity constant, the effect of any nonconvex term can be enhanced in an iterative fashion by computing the corresponding flow (which for a simple $\ell^2$ data term amounts to replacing the data $f$ by the previous iterate $u^k$).

\subsection{Mumford-Shah Regularization}
In the remaining subsections we will give examples where the overall energy is \emph{nonconvex}. Our main convergence Theorem does not hold, but we observe convergence experimentally nonetheless. 

The Mumford-Shah functional provides means to compute a discontinuity preserving piecewise smooth approximation of the input image. Strekalovskiy and Cremers \cite{MS-tr} initially proposed the nonconvex primal-dual algorithm \eqref{eq:firstFormulation} to efficiently find a minimizer of the Mumford-Shah problem. The following discretization was considered:
\begin{equation}
  \underset{u : \Omega \rightarrow \R^{k}} \min \enskip \underset{x \in \Omega} \sum \enskip (u(x) - f(x))^2 + (\widehat R_{MS} \circ \norm{\cdot}_2)(\nabla u(x))
  \label{eq:MS-nonsmooth}
\end{equation}
where $\widehat R_{MS} : \R \rightarrow \R$ is the truncated quadratic regularizer
\begin{equation}
  \widehat R_{MS}(t) = \min \{ \lambda, \, \alpha t^2 \}.
  \label{eq:truncQuad}
\end{equation}

Since \eqref{eq:truncQuad} is not $\omega$-semiconvex due to the truncation, it seems to be necessary for the step size $\sigma$ to approach infinity in order for the algorithm to converge. The authors of \cite{MS-tr} employed the variable step size scheme from \cite[Algorithm 2]{Chambolle-Pock-jmiv11} which along with $\sigma \rightarrow \infty$ has the nice property that it provably converges in the fully convex setting with rate $\mathcal{O}(1/n^2)$ if the dataterm is strongly convex. 

We propose to use a slight modification of \eqref{eq:truncQuad} in order to obtain $\omega$-semiconvexity of the regularizer. We consider the smoothed version $\widehat R_{MS}^{\varepsilon}:\R\rightarrow\R$ of \eqref{eq:truncQuad} as described in \cite{fornasier}. It is shown in Fig.~\ref{fig:truncQuad} and given as
\begin{equation}
  \widehat R_{MS}^{\varepsilon}(t) = 
  \begin{cases}
    \alpha \, t^2, &\quad t < s_1\\
    \pi(t), &\quad s_1 \leq t \leq s_2\\
    \lambda, &\quad t > s_2,
  \end{cases}
  \label{eq:truncQuadSmooth}
\end{equation}
where $s_1 = \sqrt{\lambda / \alpha} - \varepsilon$, $s_2 = \sqrt{\lambda / \alpha} + \varepsilon$. $\pi(t) = A (t - s_2)^3 + B (t - s_2)^2 + C$ is a cubic spline from \cite{fornasier} with constants
\begin{equation}
  \begin{aligned}
    &A = -\frac{\alpha}{4 \varepsilon},\enskip
    B = -\frac{\alpha (2 \sqrt{\lambda / \alpha} + \varepsilon)}{4\varepsilon},\enskip
    C = \lambda.
  \end{aligned}
\end{equation}

\begin{lemma}
  The smoothed truncated quadratic regularizer \eqref{eq:truncQuadSmooth} is $\omega$-semiconvex with constant 
  \begin{equation}
    \omega = \frac{\alpha \, (2 + \varepsilon_0)}{2 \, \varepsilon_0}
  \end{equation}
  where $\varepsilon = \varepsilon_0 \sqrt{\lambda / \alpha}$. Furthermore, the composition $R_{MS}^{\varepsilon} : \R^{d \times k} \rightarrow \R$ given by
  \begin{equation}
    R_{MS}^{\varepsilon}(g) = (\widehat R_{MS}^{\varepsilon} \circ \norm{\cdot}_2)(g)
  \end{equation}
  is also $\omega$-semiconvex with the same constant.
\end{lemma}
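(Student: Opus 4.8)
The plan is to reduce both claims to one-dimensional convexity statements via the definition of $\omega$-semiconvexity, i.e.\ by showing that each function plus $\frac{\omega}{2}\norm{\cdot}^2$ is convex. For the first part I would set $\psi(t) := \widehat R_{MS}^{\varepsilon}(t) + \frac{\omega}{2}t^2$ and argue that $\psi$ is convex on $\R$. The spline constants $A,B,C$ are chosen in \cite{fornasier} precisely so that $\widehat R_{MS}^{\varepsilon}$ is $C^1$, with matching values and first derivatives at the breakpoints $s_1$ and $s_2$; I would first record this as a short check that the one-sided values and derivatives agree at $s_1,s_2$. A $C^1$, piecewise-$C^2$ function is convex exactly when its second derivative is nonnegative wherever it exists, so it suffices to bound $(\widehat R_{MS}^{\varepsilon})''$ from below.

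Computing piecewise, $(\widehat R_{MS}^{\varepsilon})''(t)$ equals $2\alpha$ on $(-\infty,s_1)$, equals the affine function $6A(t-s_2)+2B$ on $(s_1,s_2)$, and equals $0$ on $(s_2,\infty)$. Since $A=-\frac{\alpha}{4\varepsilon}<0$, the middle branch is decreasing in $t$ and attains its minimum at $t=s_2$, where it equals $2B$. Hence $\inf_t (\widehat R_{MS}^{\varepsilon})''(t) = \min\{2\alpha,\,2B,\,0\} = 2B$, because $2B<0<2\alpha$, and $\psi''\ge 0$ holds precisely for $\omega = -2B = \frac{\alpha\,(2\sqrt{\lambda/\alpha}+\varepsilon)}{2\varepsilon}$. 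Substituting $\varepsilon = \varepsilon_0\sqrt{\lambda/\alpha}$ collapses this to $\omega = \frac{\alpha\,(2+\varepsilon_0)}{2\varepsilon_0}$, which proves the first claim.

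For the composition I would reuse the same $\psi$, now known to be convex, together with the elementary identity $R_{MS}^{\varepsilon}(g)+\frac{\omega}{2}\norm{g}_2^2 = \psi(\norm{g}_2)$. The goal becomes showing $g\mapsto\psi(\norm{g}_2)$ is convex, for which I would invoke the standard composition rule: a convex and \emph{nondecreasing} scalar function composed with the convex map $\norm{\cdot}_2$ is convex. The monotonicity is the only nontrivial ingredient, and this is exactly where the same $\omega$ turns out to suffice: near $t=0$ one has $\widehat R_{MS}^{\varepsilon}(t)=\alpha t^2$, so $\psi'(0)=0$, and since $\psi$ is convex its derivative is nondecreasing, giving $\psi'(t)\ge\psi'(0)=0$ for all $t\ge 0$. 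Thus $\psi$ is nondecreasing on $[0,\infty)$, the composition $\psi(\norm{\cdot}_2)$ is convex, and $R_{MS}^{\varepsilon}$ is $\omega$-semiconvex with the same constant.

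The main obstacle is this composition step: unlike the scalar case, semiconvexity of a radial function $\phi(\norm{\cdot}_2)$ also constrains the ``tangential'' curvature $\phi'(t)/t$, and one must verify this does not force a larger constant. The observation that $\psi'(0)=0$, so that convexity of $\psi$ alone already yields its monotonicity, is precisely what guarantees the tangential condition $\psi'(t)\ge 0$ with the same $\omega$ obtained from the radial second-derivative bound; I would present this as the conceptual heart of the second claim. A minor case to flag is the regime $s_1\le 0$ (that is $\varepsilon_0\ge 1$), where the argument for $\psi'(0)=0$ must be adapted, but in the relevant range $0<\varepsilon_0<1$ the reasoning above applies directly.
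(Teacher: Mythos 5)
Your proposal is correct and takes essentially the same route as the paper's proof: a piecewise second-derivative bound reducing everything to the spline region $[s_1,s_2]$, where the minimum $2B$ at $t=s_2$ yields $\omega=-2B=\frac{\alpha(2+\varepsilon_0)}{2\varepsilon_0}$, followed by the convex-nondecreasing composition rule applied to $\widehat R_{MS}^{\varepsilon}+\frac{\omega}{2}(\cdot)^2$ and $\norm{\cdot}_2$. The only difference is that you explicitly justify the monotonicity on $[0,\infty)$ via $\psi'(0)=0$ together with convexity (and flag the edge case $\varepsilon_0\geq 1$), a step the paper asserts without proof.
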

\begin{proof}
  A sufficient condition for $\widehat R_{MS}^{\varepsilon} + \frac{\omega}{2}t^2$ being convex is that the second derivative is nonnegative:
  \begin{equation}
    (\widehat R_{MS}^{\varepsilon})''(t) + \omega \geq 0, \quad \forall t \in \R.
  \end{equation}
  We only have to consider the region $s_1 \leq t \leq s_2$, since $\widehat R_{MS}^{\varepsilon}$ is nonconvex only in that part:
  \begin{equation}
    \begin{aligned}
      &\underset{s_1 \leq t \leq s_2} \min \, \{ 6 A (t - s_2) + 2B \} + \omega \geq 0 \\
      &\Rightarrow \quad \omega \geq  -2B = \frac{\alpha \, (2 + \varepsilon_0)}{2 \, \varepsilon_0}.
    \end{aligned}
  \end{equation}
  Since $\widehat R_{MS}^{\varepsilon}(t) + \frac{\omega}{2}t^2$ is increasing and convex, the composition $ ((\widehat R_{MS}^{\varepsilon} + \frac{\omega}{2} (\cdot)^2) \circ \norm{\cdot}_2)(g)$ is convex as well.
\end{proof}

\figTruncQuad 
\paragraph{Evaluation of the Proximal Mapping.}
Next, we describe how to evaluate the proximal mapping of the truncated quadratic regularizer $R^{\varepsilon}_{MS}$. For that we note that a slight modification of \cite[Theorem 4]{DecomposeProx} to our setting of $\omega$-semiconvex functions also holds. This allows us to reduce the evaluation of the proximal mapping of $\omega$-semiconvex regularizers to single variable optimization problems.
\begin{rem}
  Let $h : \R \rightarrow \R$ be increasing and $\omega$-semiconvex. Furthermore, let $f := h \circ \norm{\cdot}$ and $\tau^{-1} > \omega$. 
  Then the proximal mapping for $f$ is given as
  \begin{equation}
    (I + \tau \partial f)^{-1}(\tilde u) = (I + \tau \partial h)^{-1}(\norm{\tilde u}) \frac{\tilde u}{\norm{\tilde u}}
  \end{equation}
  with the convention $0/0=0$.
\end{rem}

The proximal mapping $(I + \tau \partial \widehat R_{MS}^{\varepsilon})^{-1}$ can be evaluated by considering the three piecewise parts of $\widehat R_{MS}^{\varepsilon}$ individually.

\subsubsection{Piecewise Smooth Approximations}
\figDama
First we consider a smoothed version of the functional \eqref{eq:MS-nonsmooth} to validate experimentally if the algorithm also converges in the truly nonconvex setting. 

The energy we are aiming to minimzie is given as
\begin{equation}
  \underset{u : \Omega \rightarrow \R^{k}} \min \enskip \underset{x \in \Omega} \sum ~ R_{MS}^{\varepsilon}(\nabla u(x)) + \underset{x \in \Omega} \sum ~ (f(x) - u(x))^2.
  \label{eq:MSDenoising}
\end{equation}
In Fig.~\ref{fig:dama} we show a piecewise smooth approximation of the input image computed by minimizing the functional \eqref{eq:MSDenoising} with the nonconvex primal-dual algorithm. The chosen parameters were $\alpha = 10$, $\lambda = 0.1$ and $\varepsilon_0 = 0.5$. Unlike the approach \cite{MS-tr} we use completely constant step sizes $\sigma = 2 \omega$, $\tau \sigma \leq \norm{\nabla}^{-2}$. We see that the end result is dependent on the initialization. Using a smooth initialization $u^0=0$ yields an overall smoother image than using the input image $u^0=f$. 

Since we do not have a full convergence theorem for energies which are completely nonconvex, we validate the convergence for this and the following two examples numerically. The convergence results are detailed in Appendix~\ref{lbl:appendixConv}.

\subsubsection{Image Inpainting}
\figCracktip
Image inpainting aims to fill in an inpainting region $I \subset \Omega$ in the image. We aim to minimize the following functional
\begin{equation}
  \underset{u : \Omega \rightarrow \R^{k}} \min \enskip \underset{x \in \Omega} \sum R_{MS}^{\varepsilon}(\nabla u(x)) + \underset{x \in \Omega \setminus I} \sum \chi(f(x), u(x)),
  \label{eq:MSInpainting}
\end{equation}
where $\chi$ is the following inpainting indicator function
\begin{equation}
  \chi(f, u) = \begin{cases}
    0, &\quad f = u\\
    \infty, &\quad f \neq u.
  \end{cases}
\end{equation}

The so-called cracktip problem \cite{Bonnet-01} is an instance of Mumford-Shah regularized image inpainting where an analytical globally optimal solution is known.

In Fig.~\ref{fig:cracktip} we show the results of the nonconvex primal-dual algorithm applied to the inpainting problem. The image size here is $127 \times 127$ and the according parameters are $\alpha \approx 96.82$ and $\lambda = 0.5$. We chose $\varepsilon_0 = 0.9$. We found that by using constant step sizes from the beginning, the algorithm often gets stuck in bad local minima. There are several methods to avoid getting stuck in bad local minima, such as graduated nonconvexity. Another method that works very well in practice is using large step sizes in the beginning and decreasing the step size according to some parameter $\gamma$. We employed the adaptive step size scheme
\begin{equation}
  \theta_{n}=1 / \sqrt{1 + 2 \gamma \tau_n}, \sigma_{n+1}=\sigma_n / \theta_n, \tau_{n+1}=\tau_n \theta_n,
\end{equation}
and as soon as $\sigma_n$ reached $2 \omega$, we kept the step sizes fixed and set $\theta = 1$. It was shown experimentally in \cite{MS-tr} that this step size scheme works remarkably well in practice. The step size parameters used are given in Fig.~\ref{fig:cracktip}, and we set $\tau_0 = \norm{\nabla}^2 / \sigma_0$. Interestingly, for the right choice of parameters, the nonconvex PDHG algorithm converges to the globally optimal solution.

\subsection{Image Dithering}
\figDithering
In this section we propose a simple nonconvex variational model for image dithering. Given a grayscale input image $f : \Omega \rightarrow \R$, dithering aims to produce a \emph{binary} image that is a visually similar continuous approximation to the given image $f$. We model visual similarity by a convolution with a gaussian kernel $k$ of standard deviation $\sigma = 1.75$.

The variational approach is a deconvolution problem with the constraint that the solution $u$ is binary. In order to enforce that constraint, we add a concave regularization term on $u$. The overall cost function is:
\begin{equation}
  \label{eq:dithering}
  \underset{u : \Omega \rightarrow \R} 
  \min ~ \underbrace{\norm{k * u - f}^2}_{G(u)} + \underbrace{\lambda \, \underset{x \in \Omega} \sum \, -(2 u(x) - 1)^2 + \iota_{[0,1]}(u)}_{F(u)}.
\end{equation}
Note that both proximal mappings in $F$ and $G$ are simple to evaluate. Furthermore, it can be verified quickly that $F$ is $\omega$-semiconvex with $\omega = 8 \lambda$. The proximal mapping in $G$ has an efficient solution by means of the FFT (cf. \cite{Chambolle-Pock-jmiv11}) and the proximal mapping in $F$ has the following analytic solution given pointwise at $x \in \Omega$:
$$
(I+\tau \partial \widehat F)^{-1}(\tilde g(x)) = \text{proj}_{[0,1]}\left( \frac{\tilde g(x) - 4 \lambda \tau}{1 - 8 \lambda \tau} \right ).
$$
Minimizing \eqref{eq:dithering} with a Forward-backward splitting approach would require to move the indicator function $\iota_{[0,1]}$ into $G$ to make $F$ differentiable. The proximal mapping in $G$ would then be difficult to evaluate, as an FFT based solver is not applicable anymore.

In Fig.~\ref{fig:dithering} we show the result of minimizing \eqref{eq:dithering} directly with the PDHG algorithm. The parameters used in the example were $\lambda=0.01$, $\sigma = 2 \omega$, $\tau = 1 / \sigma$. The initial $u^0$ was chosen as a thresholded random image and we set $q^0=0$. By comparing the final result with a thresholded version of the result we can see that it is almost binary. Overall the algorithm required around 200 iterations to converge.





\section{Conclusion}
\label{sec:conclusions}
In this paper we studied the PDHG method for minimizing energies of the form $E(u) = G(u) + F(Ku)$ for $\omega$-semiconvex functions $F$. We analyzed its convergence and were able to prove that splittings into a strongly convex and a nonconvex part converge if the strong convexity dominates the nonconvexity and the algorithm parameters are chosen appropriately. We constructed a simple example illustrating that the choice of algorithm parameters arising from the convergence proof are not only sufficient but necessary for the convergence. In the more general case where the total energy is nonconvex we were able to proof boundedness of the iterates and found an a-posteriori criterion to verify the convergence of the algorithm to a critical point. 

In terms of the practical relevance of our convergence proof, we demonstrated in two numerical examples that any variational model with a strongly convex energy gives us the freedom to introduce semiconvex terms that encourage a certain behavior which is impossible to represent in a framework with merely convex terms. Thanks to our analysis of the PDHG method the minimization of such energies with additional nonconvex terms does not increase the computational costs as long as the nonconvex term has an easy-to-evaluate proximity operator. 

Additionally, we demonstrated that the convergence of the investigated PDHG method goes beyond the theoretical guarantees: For Mumford-Shah denoising, Mumford-Shah inpainting as well as for image dithering we observed the convergence of the algorithm for all our numerical experiments and could use the a-posteriori criterion to verify that the final result is a critical point of the original problem. 

\bibliographystyle{splncs}
\bibliography{references,allrefs,myrefs}

\def\cprime{$'$}
\begin{thebibliography}{10}

\bibitem{ZhuChan08}
Zhu, M., Chan, T.:
\newblock An efficient primal-dual hybrid gradient algorithm for total
  variation image restoration.
\newblock Technical Report 08-34, UCLA Cam Report (2008)

\bibitem{Pock-et-al-iccv09}
Pock, T., Cremers, D., Bischof, H., Chambolle, A.:
\newblock An algorithm for minimizing the piecewise smooth {M}umford-{S}hah
  functional.
\newblock In: IEEE Int.\ Conf.\ on Computer Vision (ICCV), Kyoto, Japan (2009)

\bibitem{Esser-Zhang-Chan-10}
Esser, E., Zhang, X., Chan, T.:
\newblock A general framework for a class of first order primal-dual algorithms
  for convex optimization in imaging science.
\newblock SIAM Journal on Imaging Sciences \textbf{3}(4) (2010)  1015--1046

\bibitem{Chambolle-Pock-jmiv11}
Chambolle, A., Pock, T.:
\newblock A first-order primal-dual algorithm for convex problems with
  applications to imaging.
\newblock J. Math. Imaging Vis. \textbf{40} (2011)  120--145

\bibitem{MS-tr}
Strekalovskiy, E., Cremers, D.:
\newblock {Real-Time Minimization of the Piecewise Smooth Mumford-Shah
  Functional}.
\newblock In: Proceedings of the European Conference on Computer Vision
  ({ECCV}). (2014)  (To appear)

\bibitem{Mumford-Shah-89}
Mumford, D., Shah, J.:
\newblock Optimal approximations by piecewise smooth functions and associated
  variational problems.
\newblock Comm. Pure Appl. Math. \textbf{42} (1989)  577--685

\bibitem{Blake-Zisserman-87}
Blake, A., Zisserman, A.:
\newblock Visual Reconstruction.
\newblock MIT Press (1987)

\bibitem{Morel-Solimini-95}
Morel, J.M., Solimini, S.:
\newblock Variational Methods in Image Segmentation.
\newblock Birkh{\"a}user, Boston (1995)

\bibitem{Huang-Mumford-99}
Huang, J., Mumford, D.:
\newblock Statistics of natural images and models.
\newblock In: Int.\ Conf.\ on Computer Vision and Pattern Recognition (CVPR).
  (1999)

\bibitem{HyperLaplacian}
{D. Krishnan and R. Fergus}:
\newblock {Fast Image Deconvolution using Hyper-Laplacian Priors}.
\newblock In: Proc.~Neural Information Processing Systems.
\newblock {} ({2009})  {1033--1041}

\bibitem{Wen-Yin-13}
Wen, Z., Yin, W.:
\newblock {A feasible method for optimization with orthogonality constraints}.
\newblock Mathematical Programming \textbf{142} (2013)  397--434

\bibitem{Esser-Zhang-14}
Esser, E., Zhang, X.:
\newblock Nonlocal patch-based image inpainting through minimization of a
  sparsity promoting nonconvex functional.
\newblock Technical report, Preprint. (2014)

\bibitem{Bertsekas-2003}
Bertsekas, D.:
\newblock Nonlinear Programming.
\newblock Athena Scientific, Belmont, Mass. (2003)

\bibitem{attouch13}
Attouch, H., Bolte, J., Svaiter, B.:
\newblock Convergence of descent methods for semi-algebraic and tame problems:
  proximal algorithms, forward backward splitting, and regularized
  {Gauss-Seidel} methods.
\newblock Mathematical Programming \textbf{137}(1-2) (2013)  91--129

\bibitem{iPiano}
Ochs, P., Chen, Y., Brox, T., Pock, T.:
\newblock {iPiano: Inertial Proximal Algorithm for Non-convex Optimization}.
\newblock {SIAM Journal on Imaging Sciences (SIIMS)} (2014) Preprint.

\bibitem{fornasier}
Artina, M., Fornasier, M., Solombrino, F.:
\newblock {Linearly Constrained Nonsmooth and Nonconvex Minimization}.
\newblock SIAM Journal on Optimization \textbf{23}(3) (2013)  1904--1937

\bibitem{phamdinh}
Pham~Dinh, T., Le, H.M., Le~Thi, H.A., Lauer, F.:
\newblock {A Difference of Convex Functions Algorithm for Switched Linear
  Regression}.
\newblock IEEE Transactions on Automatic Control (2014)  to appear

\bibitem{An-Tao-05}
An, L., Tao, P.:
\newblock The dc (difference of convex functions) programming and dca revisited
  with dc models of real world nonconvex optimization problems.
\newblock Annals of Operations Research \textbf{133}(1) (2005)  23--46

\bibitem{IRL1}
Ochs, P., Dosovitskiy, A., Pock, T., Brox, T.:
\newblock {An iterated L1 Algorithm for Non-smooth Non-convex Optimization in
  Computer Vision}.
\newblock In: IEEE Conference on Computer Vision and Pattern Recognition
  (CVPR). (2013)

\bibitem{nikolova10}
Nikolova, M., Ng, M.K., Tam, C.P.:
\newblock {Fast Nonconvex Nonsmooth Minimization Methods for Image Restoration
  and Reconstruction}.
\newblock IEEE Transactions on Image Processing \textbf{19}(12) (2010)
  3073--3088

\bibitem{attouch10}
Attouch, H., Bolte, J., Redont, P., Soubeyran, A.:
\newblock Proximal alternating minimization and projection methods for
  nonconvex problems: An approach based on the kurdyka-lojasiewicz inequality.
\newblock Math. Oper. Res. \textbf{35}(2) (May 2010)  438--457

\bibitem{Valkonen-14}
Valkonen, T.:
\newblock {A primal-dual hybrid gradient method for non-linear operators with
  applications to MRI}.
\newblock Inverse Problems \textbf{30}(5) (2014)  055012

\bibitem{Chartrand-13}
Chartrand, R., Wohlberg, B.:
\newblock {A nonconvex ADMM algorithm for group sparsity with sparse groups}.
\newblock In: IEEE International Conference on Acoustics, Speech and Signal
  Processing. (May 2013)

\bibitem{Chartrand-12}
Chartrand, R.:
\newblock {Nonconvex Splitting for Regularized Low-Rank + Sparse
  Decomposition}.
\newblock tsp \textbf{60}(11) (2012)  5810--5819

\bibitem{Ozolins14}
Ozolins, V., Lai, R., Caflisch, R., Osher, S.:
\newblock {Compressed plane waves yield a compactly supported multiresolution
  basis for the Laplace operator}.
\newblock Proceedings of the National Academy of Sciences \textbf{111}(5)
  (2014)  1691--1696

\bibitem{BauschkeCombettesBook}
{Bauschke, H.H. and Combettes, P.L.}:
\newblock {Convex Analysis and Monotone Operator Theory in Hilbert Spaces}.
\newblock CMS Books in Mathematics, Ouvrages de math{\'e}matiques de la SMC.
  Springer (2011)

\bibitem{Pock-Chambolle-iccv11}
Pock, T., Chambolle, A.:
\newblock Diagonal preconditioning for first order primal-dual algorithms in
  convex optimization.
\newblock In: ICCV. (2011)  1762--1769

\bibitem{proxNonmonotone1}
Iusem, A.N., Pennanen, T., Svaiter, B.F.:
\newblock {Inexact Variants of the Proximal Point Algorithm Without
  Monotonicity}.
\newblock SIAM J. on Optimization \textbf{13}(4) (October 2002)  1080--1097

\bibitem{proxNonmonotone2}
Garciga~Otero, R., Iusem, A.:
\newblock {Fixed-Point Methods for a Certain Class of Operators}.
\newblock Journal of Optimization Theory and Applications \textbf{159}(3)
  (2013)  656--672

\bibitem{Deng-Yin-2014}
Deng, W., Yin, W.:
\newblock On the global and linear convergence of the generalized alternating
  direction method of multipliers.
\newblock Technical report, UCLA CAM Report (12-52) (Revised 2014)

\bibitem{Rockafellar-Variational-Analysis}
Rockafellar, R., Wets, R.B.:
\newblock Variational Analysis.
\newblock Springer (1998)

\bibitem{Rockafellar}
Rockafellar, R.T.:
\newblock Convex analysis.
\newblock Princeton Landmarks in Mathematics. Princeton University Press,
  Princeton, NJ (1997) Reprint of the 1970 original, Princeton Paperbacks.

\bibitem{Rudin-et-al-92}
Rudin, L.I., Osher, S., Fatemi, E.:
\newblock Nonlinear total variation based noise removal algorithms.
\newblock Physica D \textbf{60} (1992)  259--268

\bibitem{gilboa02}
Gilboa, G., Sochen, N., Zeevi, Y.:
\newblock Forward-and-backward diffusion processes for adaptive image
  enhancement and denoising.
\newblock IEEE Transactions on Signal Processing \textbf{11}(7) (2002)
  689--703

\bibitem{gilboa04}
Gilboa, G., Sochen, N., Zeevi, Y.:
\newblock Image sharpening by flows based on triple well potentials.
\newblock J. Math. Imaging Vis. \textbf{20}(1-2) (2004)  121--131

\bibitem{Andreu00}
Andreu, F., Ballester, C., Caselles, V., Mazón, J.:
\newblock Minimizing total variation flow.
\newblock Comptes Rendus de l'Académie des Sciences - Series I - Mathematics
  \textbf{331}(11) (2000)  867--872

\bibitem{DecomposeProx}
Yu, Y.:
\newblock On decomposing the proximal map.
\newblock In Burges, C.J.C., Bottou, L., Ghahramani, Z., Weinberger, K.Q.,
  eds.: NIPS. (2013)  91--99

\bibitem{Bonnet-01}
Bonnet, A.:
\newblock Cracktip is a global Mumford-Shah minimizer.
\newblock Société Mathématique de France, Paris (2001)

\end{thebibliography}

\begin{appendix}

\section{Proofs of Theorems}
\label{lbl:appendixProof}
In the appendix, we prove our main convergence result (Theorem \ref{thm:convergence}). We start our convergence analysis by deriving an equation that gives us some insight about how certain iteration errors behave. It is formulated in a Lemma and derived in a rather general form such that it can be a basis for further/future convergence analysis. 
\begin{lemma}
  \label{lem:estimate}
  Let $(v^n, \rho^n, \xi^n)$ be any sequence such that the following holds
  \begin{align}
    \label{eq:firstOpti}
    0 &= \sigma(\rho^{n+1} - K\bar{v}^n) + \partial F(\rho^{n+1}) - \xi^n,  \\
    \label{eq:update}
    \xi^{n+1} &= \xi^n + \sigma(K\bar{v}^n - \rho^{n+1}),\\
    \label{eq:secondOpti}
    0 &= \frac{1}{\tau}(v^{n+1} - v^n) + K^T\xi^{n+1} + \partial G(v^{n+1}). 
  \end{align}
  Let $(u^n, g^n, q^n)$ be the iterates generated by Algorithm \eqref{eq:firstFormulation}, and let us denote $u_e^n = u^n - v^n$, $g_e^n = g^n - \rho^n$, $q_e^n = q^n - \xi^n$. Then the following estimate holds:
  \begin{align*}
    0 =& \frac{1}{2\tau}(\|u_e^{n+1}\|^2 - \|u_e^n\|^2)  + \frac{1}{2\sigma} (\|q_e^{n+1}\|^2 - \|q_e^n\|^2) \\
    &+ \langle q_e^{n+1}, Ku_e^{n+1} \rangle - \langle q_e^{n}, K\bar{u}_e^{n} \rangle   + \frac{1}{2\tau}\| u_e^{n+1} - u_e^n \|^2 \\
    & - \frac{\sigma}{2} \theta \left(\|Ku_e^{n}\|^2 - \|Ku_e^{n-1}\|^2 \right) - \frac{\sigma(\theta + \theta^2)}{2} \|Ku_e^n - Ku_e^{n-1}\|^2\\
    &+ \langle \partial G(u^{n+1})-\partial G(v^{n+1}), u^{n+1} - v^{n+1} \rangle-\frac{\sigma}{2}\|Ku_e^{n}\|^2   \\
    &+ \langle \partial F(g^{n+1}) - \partial F(\rho^{n+1}), g^{n+1} - \rho^{n+1}\rangle +\frac{\sigma}{2}\|g_e^{n+1}\|^2.
  \end{align*}
\end{lemma}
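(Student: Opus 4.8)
The plan is to prove the identity by processing the optimality conditions of the two sequences term by term, using only the elementary polarization identity $\langle a-b, a\rangle = \tfrac12(\norm{a}^2 - \norm{b}^2 + \norm{a-b}^2)$ together with the update equations; no convexity is needed, since this is a purely algebraic statement. First I would fix subgradient representatives so that the inclusions in \eqref{eq:firstFormulation} and in \eqref{eq:firstOpti}--\eqref{eq:secondOpti} become genuine equalities. A crucial preliminary observation is that, exactly as in \eqref{eq:qIsSubgradient}, the $g$-optimality condition forces the chosen element of $\partial F(g^{n+1})$ to equal $q^{n+1}$, and by the same argument applied to \eqref{eq:firstOpti} and \eqref{eq:update} the chosen element of $\partial F(\rho^{n+1})$ equals $\xi^{n+1}$. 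Hence the $F$-monotonicity term in the claim is literally $\langle \partial F(g^{n+1}) - \partial F(\rho^{n+1}), g^{n+1}-\rho^{n+1}\rangle = \langle q_e^{n+1}, g_e^{n+1}\rangle$. Recognizing this is what lets one avoid a circular calculation: pairing the subtracted $g$-equation directly with $g_e^{n+1}$ would only reproduce a tautology, so the dual terms must instead be generated through the $q$-update.

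Second, I would subtract the primal condition \eqref{eq:secondOpti} from the corresponding line of \eqref{eq:firstFormulation} to obtain $0 = \tfrac1\tau(u_e^{n+1}-u_e^n) + K^T q_e^{n+1} + (\partial G(u^{n+1}) - \partial G(v^{n+1}))$, and pair this with $u_e^{n+1}$. The polarization identity converts $\tfrac1\tau\langle u_e^{n+1}-u_e^n, u_e^{n+1}\rangle$ into $\tfrac{1}{2\tau}(\norm{u_e^{n+1}}^2 - \norm{u_e^n}^2) + \tfrac{1}{2\tau}\norm{u_e^{n+1}-u_e^n}^2$, while $\langle K^T q_e^{n+1}, u_e^{n+1}\rangle = \langle q_e^{n+1}, Ku_e^{n+1}\rangle$. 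This single relation already produces all the $u$-dependent terms, the term $\langle q_e^{n+1}, Ku_e^{n+1}\rangle$, and the $\partial G$-term of the claim, and it equals zero.

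Third, I would treat the dual contribution. Subtracting the $q$-updates \eqref{eq:update} gives $g_e^{n+1} = K\bar u_e^n - \tfrac1\sigma(q_e^{n+1}-q_e^n)$. Substituting this into $\langle q_e^{n+1}, g_e^{n+1}\rangle + \tfrac\sigma2\norm{g_e^{n+1}}^2$ and applying polarization to the $q$-increment, the mixed products telescope and one finds the evaluation $\langle q_e^{n+1}, g_e^{n+1}\rangle + \tfrac\sigma2\norm{g_e^{n+1}}^2 = \langle K\bar u_e^n, q_e^n\rangle - \tfrac{1}{2\sigma}(\norm{q_e^{n+1}}^2 - \norm{q_e^n}^2) + \tfrac\sigma2\norm{K\bar u_e^n}^2$. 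The first two summands cancel precisely against the terms $-\langle q_e^n, K\bar u_e^n\rangle$ and $+\tfrac{1}{2\sigma}(\norm{q_e^{n+1}}^2 - \norm{q_e^n}^2)$ appearing in the claim, leaving only $\tfrac\sigma2\norm{K\bar u_e^n}^2$ to be reconciled with the remaining purely $K$-dependent terms.

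Finally, I would expand the over-relaxation $\bar u_e^n = u_e^n + \theta(u_e^n - u_e^{n-1})$, so that with $a := Ku_e^n$ and $b := Ku_e^{n-1}$ one has $K\bar u_e^n = (1+\theta)a - \theta b$. A direct expansion, in which the $\langle a,b\rangle$ cross terms cancel exactly, gives $\tfrac\sigma2\norm{K\bar u_e^n}^2 = \tfrac\sigma2\norm{a}^2 + \tfrac\sigma2\theta(\norm{a}^2 - \norm{b}^2) + \tfrac{\sigma(\theta+\theta^2)}{2}\norm{a-b}^2$, which matches, with opposite sign, precisely the three terms $-\tfrac\sigma2\norm{Ku_e^n}^2$, $-\tfrac\sigma2\theta(\norm{Ku_e^n}^2 - \norm{Ku_e^{n-1}}^2)$ and $-\tfrac{\sigma(\theta+\theta^2)}{2}\norm{Ku_e^n - Ku_e^{n-1}}^2$ in the claim. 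Inserting the evaluations from the second and third steps into the claimed right-hand side and collecting terms then makes everything cancel, establishing the identity. I expect the main obstacle to be purely organizational bookkeeping of the many quadratic terms, and in particular verifying that the $\langle a,b\rangle$ contributions of the over-relaxation expansion annihilate exactly those hidden inside $\norm{a-b}^2$ — the computation that pins down the specific coefficient $\theta + \theta^2$.
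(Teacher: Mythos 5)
Your proposal is correct and is essentially the paper's own argument in different bookkeeping: the paper likewise subtracts the two optimality systems, pairs the primal equation with $u_e^{n+1}$ via polarization, converts the dual terms through the $q$-update identity $\langle q_e^n, K\bar u_e^n - g_e^{n+1}\rangle = \frac{1}{2\sigma}(\|q_e^{n+1}\|^2-\|q_e^n\|^2) - \frac{\sigma}{2}\|K\bar u_e^n - g_e^{n+1}\|^2$, and expands $\|K\bar u_e^n\|^2$ to produce exactly the $\theta+\theta^2$ coefficient. Your explicit identification of the subgradient representatives as $q^{n+1}$ and $\xi^{n+1}$ is algebraically equivalent to the paper's step of pairing the subtracted $g$-optimality condition with $g_e^{n+1}$ while keeping $\partial F(g^{n+1})-\partial F(\rho^{n+1})$ symbolic, so the two derivations coincide term for term.
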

\begin{rem}
  We slightly abused the notation in the above equations by using the
  notation of sets like $\partial F(g^{n})$ to denote an element from
  this set. Here, this element is uniquely determinted by the algorithm
  since the subproblems are strictly convex. This has the advantage of
  avoiding even more variables and different names and has been done for
  the sake of clarity.
\end{rem}
\begin{proof}[Proof of Lemma \ref{lem:estimate}]
  Notice that the conditions \eqref{eq:firstOpti}, \eqref{eq:update}, \eqref{eq:secondOpti}, are exactly the optimality conditions to the iteration constructed by Algorithm \eqref{eq:firstFormulation}. We will later be able to either choose $(v^n, \rho^n, \xi^n)=(u^{m}, g^{m}, q^{m})$ or $(v^n, \rho^n, \xi^n)=(\hat{u}, \hat{g}, \hat{q})$ with $(\hat{u}, \hat{g}, \hat{q})$ being a critical point to our original optimization problem \eqref{eq:optiProb}.

  We subtract \eqref{eq:firstOpti}, \eqref{eq:update}, \eqref{eq:secondOpti}, from the optimality conditions for $(u^n, g^n, q^n)$ generated by Algorithm \eqref{eq:firstFormulation} to obtain:
  \begin{align*}
    0 &= \sigma(g_e^{n+1} - K\bar{u}_e^{n}) + (\partial F(g^{n+1}) - \partial F(\rho^{n+1})) - q_e^n,  \\
    0 &= \frac{1}{\tau}(u_e^{n+1} - u_e^n) + K^Tq_e^{n+1} + (\partial G(u^{n+1})-\partial G(v^{n+1})), \\
    q_e^{n+1} &= q_e^n + \sigma(K\bar{u}_e^n - g_e^{n+1}).
  \end{align*}
  Taking the inner product of the first equation with $g_e^{n+1}$ yields
  \begin{align*}
    0 =& \sigma(\|g_e^{n+1}\|^2 - \langle K\bar{u}_e^{n}, g_e^{n+1} \rangle) - \langle q_e^n, g_e^{n+1} \rangle  \\
    &+ \langle \partial F(g^{n+1}) - \partial F(\rho^{n+1}), g^{n+1} - \rho^{n+1}\rangle.  \\
    \Rightarrow 0 = & \frac{\sigma}{2}(\|g_e^{n+1}\|^2 - \|K\bar{u}_e^{n}\|^2 + \| g_e^{n+1} - K\bar{u}_e^{n}\|^2)  -  \langle q_e^n, g_e^{n+1} \rangle  \\
    &+ \langle \partial F(g^{n+1}) - \partial F(\rho^{n+1}), g^{n+1} - \rho^{n+1}\rangle .
  \end{align*}
  Taking the inner product of the second equation with $u_e^{n+1}$ yields
  \begin{align*}
    0 =& \frac{1}{\tau}(\|u_e^{n+1}\|^2 - \langle u_e^n, u_e^{n+1} \rangle ) + \langle q_e^{n+1}, Ku_e^{n+1} \rangle \\
    &+ \langle \partial G(u^{n+1})-\partial G(v^{n+1}), u^{n+1} - v^{n+1} \rangle. \\
    \Rightarrow
    0 =& \frac{1}{2\tau}(\|u_e^{n+1}\|^2 - \|u_e^n\|^2 + \| u_e^{n+1} - u_e^n \|^2 )+ \langle q_e^{n+1}, Ku_e^{n+1} \rangle \\
    &+ \langle \partial G(u^{n+1})-\partial G(v^{n+1}), u^{n+1} - v^{n+1} \rangle. \\
  \end{align*}
  Adding the two equations we obtain
  \begin{align*}
    0 =& \frac{1}{2\tau}(\|u_e^{n+1}\|^2 - \|u_e^n\|^2 + \| u_e^{n+1} - u_e^n \|^2 ) \\
    &+ \langle q_e^{n+1}, Ku_e^{n+1} \rangle - \langle q_e^{n}, K\bar{u}_e^{n} \rangle  +  \langle q_e^n, K\bar{u}^n - g_e^{n+1} \rangle \\
    &+\frac{\sigma}{2}(\|g_e^{n+1}\|^2 - \|K\bar{u}_e^{n}\|^2 + \| g_e^{n+1} - K\bar{u}_e^{n}\|^2)  \\
    &+ \langle \partial G(u^{n+1})-\partial G(v^{n+1}), u^{n+1} - v^{n+1} \rangle \\
    &+ \langle \partial F(g^{n+1}) - \partial F(\rho^{n+1}), g^{n+1} - \rho^{n+1}\rangle .
  \end{align*}
  Observe that
  \begin{align*}
    \|q_e^{n+1}\|^2 =& \|q_e^n + \sigma(K\bar{u}_e^n - g_e^{n+1})\|^2 \\
    =& \|q_e^n\|^2 + \sigma^2 \|K\bar{u}_e^n - g_e^{n+1}\|^2  + 2\sigma \langle q_e^n,K\bar{u}_e^n - g_e^{n+1} \rangle , \\
    \Rightarrow \langle q_e^n,K\bar{u}_e^n - g_e^{n+1} \rangle  =& \frac{1}{2\sigma} (\|q_e^{n+1}\|^2 - \|q_e^n\|^2) - \frac{\sigma}{2} \|K\bar{u}_e^n - g_e^{n+1}\|^2.
  \end{align*}
  Plugging this into the previous equation yields
  \begin{align*}
    0 =& \frac{1}{2\tau}(\|u_e^{n+1}\|^2 - \|u_e^n\|^2 + \| u_e^{n+1} - u_e^n \|^2 ) \\
    &+ \langle q_e^{n+1}, Ku_e^{n+1} \rangle - \langle q_e^{n}, K\bar{u}_e^{n} \rangle  + \frac{1}{2\sigma} (\|q_e^{n+1}\|^2 - \|q_e^n\|^2)  \\
    &+\frac{\sigma}{2}(\|g_e^{n+1}\|^2 - \|K\bar{u}_e^{n}\|^2 )  \\
    &+ \langle \partial G(u^{n+1})-\partial G(v^{n+1}), u^{n+1} - v^{n+1} \rangle \\
    &+ \langle \partial F(g^{n+1}) - \partial F(\rho^{n+1}), g^{n+1} - \rho^{n+1}\rangle.
  \end{align*}
  Notice that due to 
  $$ \bar{u}^n =  u^n + \theta (u^n - u^{n-1}),$$
  we have
  \begin{align*}
    \|K\bar{u}_e^n \|^2 =& \|Ku_e^n\|^2 + \theta^2 \|Ku_e^n - Ku_e^{n-1}\|^2 + 2 \theta \langle Ku_e^n, Ku_e^n -Ku_e^{n-1} \rangle \\
    =& \|Ku_e^n\|^2 + \theta^2 \|Ku_e^n - Ku_e^{n-1}\|^2 \\
    &+ \theta\left( 2\|Ku_e^n\|^2 -  2\langle Ku_e^n, Ku_e^{n-1} \rangle + \|Ku_e^{n-1}\|^2 - \|Ku_e^{n-1}\|^2 \right) \\
    =& \|Ku_e^n\|^2 +\theta^2 \|Ku_e^n - Ku_e^{n-1}\|^2 \\
    & + \theta\left(\|Ku_e^{n}\|^2 - \|Ku_e^{n-1}\|^2 + \|Ku_e^n - Ku_e^{n-1}\|^2\right)\\
    =& \|Ku_e^n\|^2 + \theta (\|Ku_e^{n}\|^2 - \|Ku_e^{n-1}\|^2) + (\theta + \theta^2) \|Ku_e^n - Ku_e^{n-1}\|^2.
  \end{align*}
  Using the above equation in our main estimate, we obtain
  \begin{align*}
    0 =& \frac{1}{2\tau}(\|u_e^{n+1}\|^2 - \|u_e^n\|^2)  + \frac{1}{2\sigma} (\|q_e^{n+1}\|^2 - \|q_e^n\|^2) \\
    &+ \langle q_e^{n+1}, Ku_e^{n+1} \rangle - \langle q_e^{n}, K\bar{u}_e^{n} \rangle   + \frac{1}{2\tau}\| u_e^{n+1} - u_e^n \|^2 \\
    & - \frac{\sigma}{2} \theta \left(\|Ku_e^{n}\|^2 - \|Ku_e^{n-1}\|^2 \right) - \frac{\sigma(\theta + \theta^2)}{2} \|Ku_e^n - Ku_e^{n-1}\|^2\\
    &+ \langle \partial G(u^{n+1})-\partial G(v^{n+1}), u^{n+1} - v^{n+1} \rangle-\frac{\sigma}{2}\|Ku_e^{n}\|^2   \\
    &+ \langle \partial F(g^{n+1}) - \partial F(\rho^{n+1}), g^{n+1} - \rho^{n+1}\rangle +\frac{\sigma}{2}\|g_e^{n+1}\|^2,
  \end{align*}
  and hence the assertion. 
\end{proof}
\begin{lemma}
  \label{lem:estimate2}
  Using the notation of Lemma \ref{lem:estimate}, let $(v^n, \rho^n, \xi^n)=(\hat{u}, \hat{g}, \hat{q})$ not depend on $n$. Then the following estimate holds:
  \begin{align*}
    0 \geq & \frac{1}{2\tau}(\|u_e^{n+1}\|^2 - \|u_e^n\|^2)  + \frac{1}{2\sigma} (\|q_e^{n+1}\|^2 - \|q_e^n\|^2) \\
    &+ \frac{1}{2\tau}\| u_e^{n+1} - u_e^n \|^2 - \frac{\sigma \theta^2}{2} \|Ku_e^n - Ku_e^{n-1}\|^2 \\
    &+ \frac{\theta}{2\tau}\|u^n - u^{n-1}\|^2- \frac{\sigma \theta }{2} \|Ku_e^n - Ku_e^{n-1}\|^2 \\
    &+ \theta \left( G(u^n) - G(u^{n-1}) -  \langle \bar{q}, K(u^{n-1}-u^n) \rangle \right) \\
    &+ \langle q_e^{n+1}, Ku_e^{n+1} \rangle - \langle q_e^{n}, Ku_e^{n} \rangle \\
    & - \frac{\sigma}{2} \theta \left(\|Ku_e^{n}\|^2 - \|Ku_e^{n-1}\|^2 \right)\\
    &+ \langle \partial G(u^{n+1})-\partial G(\hat{u}), u^{n+1} - \hat{u} \rangle-\frac{\sigma}{2}\|Ku_e^{n}\|^2   \\
    &+ \langle \partial F(g^{n+1}) - \partial F(\hat{g}), g^{n+1} -  \hat{g}\rangle +\frac{\sigma}{2}\|g_e^{n+1}\|^2.
  \end{align*}
\end{lemma}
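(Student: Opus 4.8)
The plan is to specialize the exact identity of Lemma~\ref{lem:estimate} to the constant comparison sequence $(v^n,\rho^n,\xi^n)=(\hat u,\hat g,\hat q)$ and then rewrite the single extrapolated cross term into the telescoping and energy terms displayed in the assertion, converting the equality into the claimed inequality by exactly one application of convexity of $G$.

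First I would exploit that a constant comparison sequence has constant extrapolation $\bar v^n=\hat u$, so that $\bar u_e^n=\bar u^n-\hat u=u_e^n+\theta(u_e^n-u_e^{n-1})$, and since $\hat u$ does not depend on $n$ we have $u_e^n-u_e^{n-1}=u^n-u^{n-1}$. Substituting this into the term $-\langle q_e^{n},K\bar u_e^{n}\rangle$ of Lemma~\ref{lem:estimate} splits it as
\begin{equation*}
-\langle q_e^{n},K\bar u_e^{n}\rangle
= -\langle q_e^{n},Ku_e^{n}\rangle
 -\theta\langle q_e^{n},K(u^n-u^{n-1})\rangle,
\end{equation*}
which already produces the term $-\langle q_e^n,Ku_e^n\rangle$ appearing in the assertion; it then remains to bound the extra contribution $-\theta\langle q_e^{n},K(u^n-u^{n-1})\rangle$ from below.

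Next I would write $q_e^n=q^n-\hat q$ and peel off the critical-point part, $-\theta\langle q_e^n,K(u^n-u^{n-1})\rangle=-\theta\langle q^n,K(u^n-u^{n-1})\rangle+\theta\langle\hat q,K(u^n-u^{n-1})\rangle$, where the second summand is exactly $-\theta\langle\hat q,K(u^{n-1}-u^n)\rangle$, i.e. the inner-product term in the assertion (reading $\bar q=\hat q$). For the first summand I would invoke the optimality condition for the $u$-subproblem at the previous step, $0=\tfrac1\tau(u^n-u^{n-1})+K^Tq^n+\partial G(u^n)$, hence $K^Tq^n=-\partial G(u^n)-\tfrac1\tau(u^n-u^{n-1})$. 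Pairing with $u^n-u^{n-1}$ gives
\begin{equation*}
-\theta\langle q^n,K(u^n-u^{n-1})\rangle
=\theta\langle\partial G(u^n),u^n-u^{n-1}\rangle+\tfrac{\theta}{\tau}\|u^n-u^{n-1}\|^2 .
\end{equation*}
Here the one and only inequality enters: convexity of $G$ gives $\langle\partial G(u^n),u^n-u^{n-1}\rangle\ge G(u^n)-G(u^{n-1})$, and since $\theta,\tau>0$ I may discard half of the quadratic surplus, $\tfrac{\theta}{\tau}\|u^n-u^{n-1}\|^2\ge\tfrac{\theta}{2\tau}\|u^n-u^{n-1}\|^2\ge0$, to match the stated coefficient $\tfrac{\theta}{2\tau}$.

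Finally I would collect the pieces: re-split the term $-\tfrac{\sigma(\theta+\theta^2)}{2}\|Ku_e^n-Ku_e^{n-1}\|^2$ of Lemma~\ref{lem:estimate} into $-\tfrac{\sigma\theta^2}{2}\|\cdot\|^2$ and $-\tfrac{\sigma\theta}{2}\|\cdot\|^2$, and relabel the subdifferential difference terms via $v^{n+1}=\hat u$, $\rho^{n+1}=\hat g$; every remaining term carries over unchanged. Since Lemma~\ref{lem:estimate} asserts its right-hand side equals $0$ and the only modification replaces a quantity by a smaller lower bound, the modified right-hand side is $\le 0$, which is precisely the claimed inequality. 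The main (indeed essentially the only) obstacle is the bookkeeping around the extrapolation term: correctly identifying which part of $-\langle q_e^n,K\bar u_e^n\rangle$ becomes the telescoping $G$-energy and keeping the direction of the convexity estimate consistent; the rest is algebraic substitution inherited from Lemma~\ref{lem:estimate}.
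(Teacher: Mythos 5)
Your proposal is correct and follows essentially the same route as the paper's own proof: specialize the identity of Lemma~\ref{lem:estimate} to the constant triple $(\hat u,\hat g,\hat q)$, split $-\langle q_e^n, K\bar u_e^n\rangle$ into $-\langle q_e^n, Ku_e^n\rangle$ plus the extrapolation term, lower-bound that term, and reorder (correctly reading the statement's $\bar q$ as $\hat q$). The only minor difference is the mechanism of the single inequality: the paper compares the objective values of the $u$-subproblem at the minimizer $u^n$ and at $u^{n-1}$, which yields the coefficient $\tfrac{\theta}{2\tau}$ directly and uses only minimality rather than convexity of $G$, whereas you invoke the first-order optimality condition plus convexity of $G$ to obtain the stronger coefficient $\tfrac{\theta}{\tau}$ and then discard half --- both are valid under the standing assumptions.
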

\begin{proof}
  We start with the equality from Lemma \ref{lem:estimate} and observe that now
  \begin{align*}
    - \langle q_e^{n}, K\bar{u}_e^{n} \rangle =&  - \langle q_e^{n}, Ku_e^{n} \rangle + \theta \langle q^{n} - \hat{q}, K(u^{n-1}-u^n) \rangle \\
    =&  - \langle q_e^{n}, Ku_e^{n} \rangle + \theta \langle q^{n}, K(u^{n-1}-u^n) \rangle  - 
    \theta \langle \hat{q}, K(u^{n-1}-u^n) \rangle. 
  \end{align*}
  Using the fact that $u^{n}$ is determined by solving a minimization problem, it is obvious that the corresponding energy evaluated at the minimizer $u^{n}$ is lower than the energy evaluated at the previous iterate, $u^{n-1}$. Hence, 
  \begin{align*}
    & \frac{1}{2\tau}\|u^n - u^{n-1}\|^2 + \langle u^n, K^T q^{n} \rangle + G(u^n) \leq \langle u^{n-1}, K^T q^{n} \rangle + G(u^{n-1}), \\
    \Rightarrow & \langle u^{n-1} -  u^n, K^T q^{n} \rangle  \geq \frac{1}{2\tau}\|u^n - u^{n-1}\|^2 + G(u^n) - G(u^{n-1}),
  \end{align*}
  such that
  \begin{align*}
    - \langle q_e^{n}, K\bar{u}_e^{n} \rangle \geq&  - \langle q_e^{n}, Ku_e^{n} \rangle \\
    &+ \theta \left(\frac{1}{2\tau}\|u^n - u^{n-1}\|^2 + G(u^n) - G(u^{n-1}) -  \langle q, K(u^{n-1}-u^n) \rangle \right).
  \end{align*}
  Using this estimate in the equality from Lemma \ref{lem:estimate} as well as reordering the terms leads to the assertion. 
\end{proof}

\begin{rem}
  It is easy to see that a point such that we can choose $(v^n, \rho^n, \xi^n)=(\hat{u}, \hat{g}, \hat{q})$ is exactly a critical point to our optimization problem \eqref{eq:optiProb}. Hence, such a point exists whenever the energy has at least one critical point. 
\end{rem}

We will now use the lengthy estimate of Lemma \ref{lem:estimate2} to prove our convergence result with the idea that each line of the estimate can be controlled when summing over all $n$, if certain convexity properties hold. Most terms form telescope sums, such that only the first and the last summand remains. 

\begin{proof}[Proof of Theorem \ref{thm:convergence}]
  The $\omega$-semiconvexity of $F$ along with the strong convexity of $G$ allows us to estimate that
  \begin{align*}
    &\langle \partial G(u^{n+1})-\partial G(\hat{u}), u^{n+1} - \hat{u} \rangle-\frac{\sigma}{2}\|Ku_e^{n+1}\|^2   \\
    &+ \langle \partial F(g^{n+1}) - \partial F(\hat{g}), g^{n+1} - \hat{g}\rangle +\frac{\sigma}{2}\|g_e^{n+1}\|^2 \\
    \geq& c\|u^{n+1} - \hat{u}\|^2 -\frac{\sigma}{2}\|K(u^{n+1} - \hat{u})\|^2  - \omega \|g^{n+1} - \hat{g} \|^2 +\frac{\sigma}{2}\|g^{n+1} - \hat{g}\|^2 \\
    \geq&   \left(c - \omega \|K\|^2\right)\|u^{n+1} - \hat{u}\|^2.
  \end{align*}
  We start with the estimate derived in Lemma \ref{lem:estimate2},  keep the first row unchanged, use $\frac{\sigma \theta^2 \|K\|^2}{2} \leq \frac{1}{2\tau}$ in the second row, state that the third row is nonnegative due to  $\tau \sigma \|K\|^2 \leq 1$, and use the above estimate to state that 
  \begin{align*}
    0 \geq & \frac{1}{2\tau}(\|u_e^{n+1}\|^2 - \|u_e^n\|^2)  + \frac{1}{2\sigma} (\|q_e^{n+1}\|^2 - \|q_e^n\|^2) \\
    &+ \frac{1}{2\tau} \left(\| u_e^{n+1} - u_e^n \|^2 - \|u_e^n - u_e^{n-1}\|^2\right) \\
    &+ \theta \left( G(u^n) - G(u^{n-1}) -  \langle \hat{q}, K(u^{n-1}-u^n) \rangle \right) \\
    &+ \langle q_e^{n+1}, Ku_e^{n+1} \rangle - \langle q_e^{n}, Ku_e^{n} \rangle \\
    &+\frac{\sigma}{2}\left(\|Ku_e^{n+1}\|^2 -\|Ku_e^{n}\|^2 \right) - \frac{\sigma}{2} \theta \left(\|Ku_e^{n}\|^2 - \|Ku_e^{n-1}\|^2 \right)\\
    & + \left(c - \omega \|K\|^2\right)\|u^{n+1} - \hat{u}\|^2.
  \end{align*}
  Summing over the above inequality from $n=1$ to $n=N$ yields
  \begin{align*}
    0 \geq & \frac{1}{2\tau}(\|u_e^{N+1}\|^2 - \|u_e^1\|^2)  + \frac{1}{2\sigma} (\|q_e^{N+1}\|^2 - \|q_e^1\|^2) \\
    &+ \frac{1}{2\tau} \left(\| u_e^{N+1} - u_e^N \|^2 - \|u_e^1 - u_e^{0}\|^2\right) \\
    &+ \theta \left( G(u^N) - G(u^{0}) -  \langle \hat{q}, K(u^{0}-u^N) \rangle \right) \\
    &+ \langle q_e^{N+1}, Ku_e^{N+1} \rangle - \langle q_e^{1}, Ku_e^{1} \rangle \\
    &+\frac{\sigma}{2}\left(\|Ku_e^{N+1}\|^2 -\|Ku_e^{1}\|^2 \right) - \frac{\sigma}{2} \theta \left(\|Ku_e^{N}\|^2 - \|Ku_e^{0}\|^2 \right)\\
    & + \left(c - \omega \|K\|^2\right)\sum_{n=1}^N\|u^{n+1} - \hat{u}\|^2.
  \end{align*}
  Denoting the constant 
  \begin{align*}
    \kappa &=  \frac{1}{2\tau}\|u_e^1\|^2 + \frac{1}{2\sigma}  \|q_e^1\|^2 +  \frac{1}{2\tau} \|u_e^1 - u_e^{0}\|^2+ \theta G(u^{0}) \\
    &+ \langle \hat{q}, Ku^{0} \rangle +  \langle q_e^{1}, Ku_e^{1} \rangle +  \frac{\sigma}{2} \left(\|Ku_e^{1}\|^2 - \theta\|Ku_e^{0}\|^2 \right),
  \end{align*}
  we have
  \begin{align}
    \label{eq:temp}
    \kappa \geq & \frac{1}{2\tau}\|u_e^{N+1}\|^2 + \frac{1}{2\sigma} \|q_e^{N+1}\|^2 + \frac{1}{2\tau} \| u_e^{N+1} - u_e^N \|^2 \nonumber \\
    &+ \theta \left( G(u^N)+  \langle K^T\hat{q}, u^N \rangle \right) 
    + \langle q_e^{N+1}, Ku_e^{N+1} \rangle \nonumber \\
    &+\frac{\sigma}{2}\left(\|Ku_e^{N+1}\|^2 -\theta\|Ku_e^{N}\|^2 \right)  + \left(c - \omega \|K\|^2\right)\sum_{n=1}^N \|u^{n+1} - \hat{u}\|^2.
  \end{align}
  We observe that
  \begin{align}
    \label{eq:temp2}
    \langle q_e^{N+1}, Ku_e^{N+1} \rangle &\geq  -\|K\|\|q_e^{N+1}\|\|u_e^{N+1}\| \geq -\frac{1}{\sqrt{\tau \sigma}}\|q_e^{N+1}\|\|u_e^{N+1}\| \nonumber \\
    &\geq - \left(\frac{1}{2\sigma} \|q_e^{N+1}\|^2 + \frac{1}{2\tau}\|u_e^{N+1}\|^2 \right),
  \end{align}
  where we used the inequality $2ab \leq a^2 + b^2$ with $a = \frac{1}{\sqrt{\tau}}\norm{u_e^{N+1}}$ and $b = \frac{1}{\sqrt{\sigma}}\norm{q_e^{N+1}}$ in the last step.

  Additionally, any stationary point $(\hat{u}, \hat{g}, \hat{q})$ meets $-K^T\hat{q} \in \partial G(\hat{u})$. Since $G$ is strongly convex, we have
  \begin{align}
    \label{eq:temp3}
    G(u^N) - G(\hat{u}) &- \langle -K^T\hat{q}, u^N - \hat{u} \rangle \geq \frac{c}{2} \| u^N - \hat{u} \|^2, \nonumber \\
    \Rightarrow G(u^N) +  \langle K^T\hat{q}, u^N \rangle &\geq G(\hat{u}) - \langle -K^T\hat{q}, \hat{u} \rangle + \frac{c}{2} \| u^N - \hat{u} \|^2 \nonumber \\
    &\geq G(\hat{u}) - \langle -K^T\hat{q}, \hat{u} \rangle + \frac{\sigma}{4} \| Ku_e^N \|^2.
  \end{align}
  Using \eqref{eq:temp2} and \eqref{eq:temp3} in \eqref{eq:temp}, we obtain
  \begin{align}
    \kappa - \theta(G(\hat{u}) + \langle K^T\hat{q}, \hat{u} \rangle) \geq  \frac{\sigma}{2}\|Ku_e^{N+1}\|^2 -  \frac{\sigma \theta}{4} \|Ku_e^{N}\|^2 + \left(c - \omega \|K\|^2\right)\sum_{n=1}^N \|u^{n+1} - \hat{u}\|^2.
  \end{align}
  The left hand side is independent of $N$. If we choose $\theta=0$, the right hand side is nonnegative and since $(c - \omega \|K\|^2)>0$, the sum $\sum_{n=1}^N \|u^{n+1} - \hat{u}\|^2$ is bounded such that $\|u^{n+1} - \hat{u}\|^2 \rightarrow 0$ as least as fast as $1/N$. For $\theta >0$ we can first conclude the boundedness of $\|Ku_e^N\|^2$ with the help of 
  $$  \kappa - \theta(G(\hat{u}) + \langle K^T\hat{q}, \hat{u} \rangle) \geq 2\left( \frac{\sigma}{4}\|Ku_e^{N+1}\|^2\right) - \left( \frac{\sigma}{4}\|Ku_e^{N}\|^2\right),$$
and then make the same argument as above.
\end{proof}

\section{Experimental Convergence Results}
\label{lbl:appendixConv}
\figConvergences
In Fig.~\ref{fig:Convergences} we show experimental convergence to a critical point for the three examples where we don't have a full convergence theorem yet. 

\paragraph{Mumford-Shah Denoising.}
In the first row of Fig.~\ref{fig:Convergences} we show that $\norm{u^{n+1}-u^n}$ and $\norm{q^{n+1} - q^n}$ approach zero for the Mumford-Shah Denoising example. The input image and parameters were chosen as in Fig.~\ref{fig:dama} with varying $\lambda$. Using Proposition~\ref{prop:boundedness} and Proposition~\ref{lem:distances} we experimentally verify convergence to a critical point of our energy functional.

\paragraph{Mumford-Shah Inpainting and Image Dithering.}
Similarly to the previous paragraph, we also experimentally verify convergence for the Mumford-Shah Inpainting and Dithering examples in Fig.~\ref{fig:Convergences}. Parameters were chosen as described in according sections.

\end{appendix}

\end{document}